\pdfoutput=1
\documentclass[a4paper]{amsart}
\usepackage{ifdraft}
\PassOptionsToPackage{final,hidelinks}{hyperref}
\usepackage{nima}
\usepackage{amsaddr}
\usepackage{tikz-cd}
\ifdraft{\usepackage{screen}}{}
\usepackage{xnotes}

\DeclareMathOperator{\CAT}{CAT}

\DeclareMathOperator{\GL}{GL}
\DeclareMathOperator{\Isom}{Isom}
\DeclareMathOperator{\IsomAut}{IsomAut}

\DeclareMathOperator{\tor}{tor}
\DeclareMathOperator{\AsCone}{AsCone}

\theoremstyle{definition}
\newtheorem*{cons*}{Construction}

\newcommand{\E}{\mathbb{E}}
\renewcommand{\sth}{:}

\newcommand{\eqnlabel}[1]{\label{eqn:#1}}
\newcommand{\eqnref}[1]{\ref{eqn:#1}}
\newcommand{\peqnref}[1]{(\eqnref{#1})}

\setcounter{tocdepth}{1}

\title{Crystallographic Helly Groups}
\author{Nima Hoda}
\address{DMA, École normale supérieure \\ Université
  PSL, CNRS \\ 75005 Paris, France
  \\ \ \\ Instytut Matematyczny,
  Uniwersytet Wroc\l awski\\
  pl.\ Grun\-wal\-dzki 2/4,
  50--384 Wroc{\l}aw, Poland}
\email{nima.hoda@mail.mcgill.ca}
\date{\today}

\keywords{systolic group, %
  systolic complex, %
  bridged graph, %
  Coxeter group, %
  Helly group, %
  Helly graph, %
  injective space, %
  injective group, %
  coarse injectivity, %
  hyperconvexity, %
  virtually nilpotent group, %
  virtually abelian group, %
  crystallographic group} %
\subjclass[2010]{20F65, 
  20F67, 
  05C12, 
  20H15, 
  20F55, 
  20F18} 



\begin{document}

\begin{abstract}
  We prove that asymptotic cones of Helly graphs are countably
  hyperconvex.  We use this to show that virtually nilpotent Helly
  groups are virtually abelian and to characterize virtually abelian
  Helly groups via their point groups.  In fact, we do this for the
  more general class of coarsely injective spaces and groups.  We
  apply this to prove that the $3$-$3$-$3$-Coxeter group is not Helly
  (nor even coarsely injective), thus obtaining the first example of a
  systolic group that is not Helly, answering a question of Chalopin,
  Chepoi, Genevois, Hirai, and Osajda.
\end{abstract}

\maketitle

\tableofcontents

\section{Introduction}

A graph $\Gamma$ is \defterm{Helly} if any collection of pairwise
intersecting metric balls in the vertex set $\Gamma^0$ of $\Gamma$ has
nonempty total intersection.  A group $G$ is \defterm{Helly} if it
acts properly and cocompactly on a Helly graph \cite{Chalopin:2020}.
Such an action has very strong consequences for $G$ including that $G$
is biautomatic, that $G$ has finitely many conjugacy classes of finite
subgroups, that there is a finite model for $\underline{E}G$ and that
$G$ admits an EZ-boundary \cite{Chalopin:2020}.  The class of Helly
groups is vast: it includes Gromov hyperbolic groups \cite{Lang:2013,
  chepoi:packing:2017, Chalopin:2020}, cocompactly cubulated groups
\cite{Bandelt:1991, Polat:2001, Roller:1998, Gerasimov:1998,
  Chepoi:2000}, finitely presented graphical $C(4)$-$T(4)$-small
cancellation groups \cite{Chalopin:2020}, quadric groups
\cite{Chalopin:2020}, Artin groups of FC-type and weak Garside groups
of finite type \cite{Huang:2019}.

Helly graphs are the integer metric analog of hyperconvex metric
spaces, which are equivalent to injective metric spaces
\cite{Aronszajn:extension}.  While the ball intersection property that
defines hyperconvex metric spaces is more useful for us here, we will
use the terminology \emph{injective metric space} since it is more
common in the geometric group theory literature.  A metric space $X$
is \defterm{injective} if and only if $X$ is geodesic and every
pairwise intersecting collection of closed balls in $X$ has nonempty
total intersection.  Every metric space isometrically embeds in a
canonical smallest injective metric space \cite{Isbell:1964}.
Injective metric spaces are complete, they have strong fixed point
properties \cite{espinola_khamsi:hyperconvex} and they admit
equivariant geodesic bicombings \cite{Lang:2013}.  Every Helly group
acts properly and cocompactly on an injective metric space
\cite{Chalopin:2020}.

The main contribution of this paper is the following theorem.  See
\Ssecref{point_group} for the definition of the point group.

\begin{mainthm}[\Thmref{main}]
  \mainthmlabel{main} Let $G$ be a finitely generated virtually
  abelian group with point group $P' < \GL_n(\R)$.  If $G$ is Helly
  (or just coarsely injective) then $P'$ is conjugate to a subgroup
  of $\IsomAut\bigl(\R^n,|\cdot|_{\infty}\bigr)$, the group of
  isometric automorphisms of $\R^n$ with the supremum norm
  $|\cdot|_{\infty}$.  Consequently, $G$ acts properly and cocompactly
  by isometries on $\bigl(\R^n,|\cdot|_{\infty}\bigr)$.
\end{mainthm}

Hagen proved the consequent of \Thmref{main} for $G$ cocompactly
cubulated \cite{Hagen:2014}.  Thus \Thmref{main} is a strengthening of
Hagen's result to the more general class of Helly groups.  Moreover,
as Hagen also proves a converse in the cocompactly cubulated case, we
see that \Thmref{main} fully characterizes the virtually abelian Helly
groups and we have the following.

\begin{mainthm}[\Thmref{main}]
  \mainthmlabel{equivalences} Let $G$ be a virtually abelian group.  The
  following conditions are equivalent.
  \begin{enumerate}
  \item $G$ is Helly.
  \item $G$ is coarsely injective.
  \item $G$ is cocompactly cubulated.
  \end{enumerate}
\end{mainthm}

In more recent work, Petyt and Spriano show that for the class of
crystallographic groups, being hierarchically hyperbolic is also
equivalent to being cocompactly cubulated
\cite{petyt_spriano:unbounded:2023}.

As an application of \Mainthmref{main}, we prove that the
$3$-$3$-$3$-Coxeter group is not Helly, thus obtaining a first example
of a systolic group that is not Helly and answering a question of
Chalopin, Chepoi, Genevois, Hirai, and Osajda \cite{Chalopin:2020}.

We also prove the following result, which is already a consequence of
semihyperbolicity of coarsely injective groups
\cite{Alonso_Bridson:1995:semihyperbolic, Lang:2013}.

\begin{mainthm}[\Thmref{nilpotent}]
  \mainthmlabel{nilpotent} Let $G$ be a Helly (or coarsely injective)
  group.  If $G$ is virtually nilpotent then $G$ is virtually abelian.
\end{mainthm}

Our proofs of \Mainthmref{main} and \Mainthmref{nilpotent} apply
results of Pansu on asymptotic cones of nilpotent groups
\cite{Pansu:1983} and results of Isbell on injective metric spaces
\cite{Isbell:1964}.  Along the way we prove the following theorem
which may be of independent interest.

\begin{mainthm}[\Thmref{chc}]
  \mainthmlabel{chc} Let $Y$ be a coarsely injective space (e.g. a
  Helly graph) and let $X$ be an asymptotic cone of $Y$.  Then $X$ is
  countably hyperconvex.
\end{mainthm}

A metric space $X$ is \defterm{countably hyperconvex} if it is
geodesic and every pairwise intersecting \emph{countable} collection
of closed balls in $X$ has nonempty total intersection.  Countable
hyperconvexity is also known as $\aleph_1$-hyperconvexity
\cite{Aronszajn:extension}.

\subsection{Structure of the paper}

In \Secref{preliminaries} we give a few basic definitions from rough
metric geometry.  In \Secref{virtually_zn} we show some fundamental
properties of finitely generated virtually abelian groups and review
classical results relating them to crystallographic groups.  In
\Secref{zn_metrics} we characterize the proper, roughly geodesic,
left-invariant metrics on $\Z^n$.  In
\Secref{ascones_and_coarse_injective} we define and review basic
properties of asymptotic cones and prove \Mainthmref{chc}.  In
\Secref{nilpotent_injective} we prove \Mainthmref{nilpotent}.  In
\Secref{abelian_helly} we prove \Mainthmref{main} and
\Mainthmref{equivalences}.

\subsection{Acknowledgements}

The author would like to thank Victor Chepoi for his excellent course
on Helly groups at the 2019 Simons Semester in Geometric and Analytic
Group Theory in Warsaw.  The author is also thankful to Piotr
Przytycki and Damian Osajda for their encouragement in pursuing this
research, to Urs Lang for his helpful suggestions on the virtually
nilpotent case and to Pierre Pansu for a very informative discussion
on asymptotic cones of nilpotent groups.  This work was partially
supported by the ERC grant GroIsRan, the Polish Narodowe Centrum Nauki
UMO-2017/25/B/ST1/01335 as well as by the grant 346300 for IMPAN from
the Simons Foundation and the matching 2015-2019 Polish MNiSW fund.

\section{Preliminaries}
\seclabel{preliminaries}

In this section we give a few basic definitions from rough metric
geometry.

Let $X$ and $Y$ be metric spaces, let $S$ be a set and let $C$ be a
positive real.  A function $f \colon S \to Y$ is $C$-roughly onto if
every $y \in Y$ is at distance at most $C$ from some point in
$f(X) \subset Y$.  Two functions $f_1,f_2 \colon S \to Y$ are
\defterm{$C$-roughly equivalent} if
\[ d\bigl(f_1(s),f_2(s)\bigr) \le C \] for every $s \in S$.  A
\defterm{$C$-rough isometric embedding} from $X$ to $Y$ is a function
$f \colon X \to Y$ such that
\[ d(x_1,x_2) - C \le d\bigl(f(x_1),f(x_2)\bigr) \le d(x_1,x_2) + C \]
for all $x_1,x_2 \in X$.  A $C$-rough isometric embedding
$f \colon X \to Y$ is a \defterm{$C$-rough isometry} if it is roughly
onto.  Note that two metrics $d_1$ and $d_2$ on $X$ are roughly
equivalent (as functions $X \times X \to \R$) if and only if the
identity map on $S$ is a rough isometry from $(X,d_1)$ to $(X,d_2)$.

A \defterm{$C$-rough geodesic} in $X$ from $x_1$ to $x_2$ is a
$C$-rough isometric embedding $f$ from the interval
$\bigl[0,\ell\bigr] \subset \R$ to $X$ with $\ell = d(x_1,x_2)$ such
that $f(0) = x_1$ and $f(\ell) = x_2$.  A metric space $(X,d)$ is
\defterm{$C$-roughly geodesic} if every pair of points in $X$ is
joined by a $C$-rough geodesic.

\section{Virtually \texorpdfstring{$\Z^n$}{Z\textasciicircum n}
  groups}
\seclabel{virtually_zn}

In this section we show some fundamental properties of finitely
generated virtually abelian groups.  We define the point group and
review classical results relating crystallographic groups and finitely
generated virtually abelian groups.

Let $G$ be a virtually $\Z^n$ group and let $A < G$ be a finite index
subgroup isomorphic to $\Z^n$.  By replacing $A$ with its normal core
$\bigcap _{g \in G} gAg^{-1}$ we may assume that $A$ is a normal
subgroup.  Hence, choosing some identification of $A$ with $\Z^n$, we
have a short exact sequence of groups
\[ 1 \to \Z^n \to G \to P \to 1 \] with $P$ finite.

\subsection{The point group}
\sseclabel{point_group}

Let $G \to \GL_n(\Z)$ be the conjugation action $g \cdot z = gzg^{-1}$
of $G$ on $\Z^n$.  The image $P'$ of this map, viewed as a subgroup of
$\GL_n(\R)$, is the \defterm{point group} of $G$.  Note that $P'$ is
finite since the action $G \to \GL_n(\Z)$ factors through $P$.  The
point group of $G$ is well defined up to conjugation by elements of
$\GL_n(\R)$.  That is, if we choose a different finite index normal
subgroup $A' < G$ and some identification of $A'$ with $\Z^{n'}$ then
$n' = n$ and the image of $G$ in $\GL_n(\Z)$ is conjugate to $P'$ by
an element of $\GL_n(\R)$.  To show this, since $A' \cap A$ has finite
index in both $A'$ and $A$, it suffices to consider the case where
$A' < A$.  In this case we have two $G$ actions on $\Z^n$ and a
$G$-equivariant inclusion $\Z^n \hookrightarrow \Z^n$.  But this
extends to a $G$-equivariant isomorphism $f \colon \R^n \to \R^n$ and
we see that the $G$-actions differ by conjugation by
$f \in \GL_n(\R)$.

\subsection{Crystallographic groups}

\seclabel{crystals} The $G$-action on $\Z^n$ extends to a $G$-action
on $\R^n$ such that the inclusion $\Z^n \hookrightarrow \R^n$ is
$G$-equivariant.  So, by \Propref{sesmap}, we have a unique group
homomorphism $G \to G'$ and short exact sequence
\[ 1 \to \R^n \to G_{\R} \to P \to 1 \] such that the diagram
\[ \begin{tikzcd}
    1 \ar[r] & \Z^n \ar[r] \ar[d,hook] & G \ar[r] \ar[d] & P \ar[r] \ar[d, "1_P"] & 1 \\
    1 \ar[r] & \R^n \ar[r] & G_{\R} \ar[r] & P \ar[r] & 1
  \end{tikzcd} \] commutes.  Since $P$ is finite and $\R^n$ is
divisible and torsion-free, we have $H^k(P , \R^n) = 0$ for all $k$.
So, by the classification of group extensions via second cohomology
\cite[Chapter~IV]{brown:cohomology}, the projection $G_{\R} \to P$ has
a section and so $G_{\R} \isomor \R^n \rtimes P$.  The image $P'$ of
the action $P \to \GL_n(\R)$ is the point group of $G$.  Since
$P' < \GL_n(\R)$ is finite, for some $\psi \in \GL_n(\R)$, we have
$\psi P' \psi^{-1} < O(n)$.  So there is an isomorphism
$G_{\R} \to \R^n \rtimes_{\alpha} P$ where $\alpha \colon P \to O(n)$
is defined by $\alpha_p(r) = \psi\bigl(p \cdot \psi^{-1}(r)\bigr)$.
But the isometry group $\Isom(\E^n)$ of $n$-dimensional Euclidean
space $\E^n$ splits as $\R^n \rtimes O(n)$, where the $\R^n$ factor
acts by translations and the $O(n)$ factor acts linearly.  So there is
a map $\R^n \rtimes_{\alpha} P \to \Isom(\E^n)$ such that the diagram
\[ \begin{tikzcd}
    1 \ar[r] & \R^n \ar[r] \ar[d, "\psi"] & G_{\R} \ar[r] \ar[d, "\isomor"] & P \ar[r] \ar[d, "1_P"] & 1 \\
    1 \ar[r] & \R^n \ar[r] \ar[d, "1_{\R^n}"] & \R^n \rtimes_{\alpha} P \ar[r] \ar[d] & P \ar[r] \ar[d] & 1 \\
    1 \ar[r] & \R^n \ar[r] & \Isom(\E^n) \ar[r] & O(n) \ar[r] & 1
  \end{tikzcd} \] commutes.  Composing the maps
$G \to G_{\R} \to \R^n \rtimes_{\alpha} P \to \E^n$ we obtain a map
$G \to \Isom(\E^n)$ that is injective on $\Z^n$ and so has finite
kernel $K$.  The resulting action of $G$ on $\E^n$ is cocompact since
$G \to \Isom(\E^n)$ sends $\Z^n$ to a lattice in the translation group
$\R^n$.  Since, additionally, the group $P$ is finite, the action of
$G$ on $\E^n$ is proper.  Groups acting properly, cocompactly and
faithfully on $\E^n$ are known as \defterm{crystallographic groups}.
We have shown that any finitely generated virtually abelian group has
a quotient by a finite normal subgroup that is crystallographic.  This
was first proved by Zassenhaus \cite{zassenhaus:crystallographic}.

Conversely, given a crystallographic group $G < \Isom(\E^n)$, the map
$G \to O(n)$ induced by the projection $\R^n \rtimes O(n) \to O(n)$
has finite image and has kernel isomorphic to $\Z^n$
\cite{bieberbach:crystallographic1, bieberbach:crystallographic2}.
Thus $G$ is virtually abelian and has the image of $G \to O(n)$ as its
point group.

\section{Metrics on \texorpdfstring{$\Z^n$}{Z\textasciicircum n} and norms}
\seclabel{zn_metrics}

In this section we use a result of Burago and Fujiwara to characterize
the proper, roughly geodesic, left-invariant metrics on $\Z^n$, up to
rough isometry.

We denote the \defterm{discrete metric} on any set by the following
notation.
\[ \delta(x_1, x_2) =
  \begin{cases}
    1 & \text{if $x_1 \neq x_2$} \\
    0 & \text{otherwise}
  \end{cases} \]

An action of a group $G$ on a metric space $Y$ is \defterm{metrically
  proper} if for any $y \in Y$ and $r \ge 0$ the set
$\{g \in G \sth d(gy,y) \le r \}$ is finite.  The action of $G$ on $Y$
is \defterm{cobounded} if there exists $r \ge 0$ such that for any
$y,y' \in Y$ there exists $g \in G$ satisfying $d(gy,y') \le r$.

\begin{prop}
  \proplabel{pbmetric} Let $G$ be a group acting metrically properly
  and coboundedly on a geodesic metric space $Y$.  Then, for any
  choice of basepoint $y_0 \in Y$, the expression
  \[ d_G(g_1,g_2) = d_{Y}(g_1\cdot y_0,g_2\cdot y_0) +
    \delta(g_1, g_2) \] defines a proper, roughly geodesic,
  left-invariant metric on $G$.  Choosing a different basepoint
  results in a roughly equivalent metric.

  Moreover the map
  \begin{align*}
    (G,d) &\to Y \\
    g &\mapsto g\cdot y_0
  \end{align*}
  is a rough isometry.
\end{prop}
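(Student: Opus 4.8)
The plan is to verify the metric axioms and left-invariance by hand, then to observe that properness together with cocompactness makes the orbit map $\pi\colon g\mapsto g\cdot v_0$ a rough isometry, and finally to deduce rough geodesicity by pushing geodesics of $\Gamma$ back to $G$ along a rough inverse of $\pi$. That $d_G$ is a metric is easy: symmetry is clear, $d_G(g_1,g_2)=0$ forces $g_1=g_2$ since the summand $\delta(g_1,g_2)$ is present precisely to separate distinct elements sharing an image in $\Gamma^0$ (which happens exactly when the stabiliser of $v_0$ is nontrivial), and the triangle inequality holds because $d_G$ is the sum of the pseudometric $(g_1,g_2)\mapsto d_\Gamma(g_1 v_0,g_2 v_0)$ pulled back from $\Gamma$ and the metric $\delta$. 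Left-invariance follows from $G$ acting on $\Gamma$ by graph automorphisms, so $d_\Gamma(h g_1 v_0,h g_2 v_0)=d_\Gamma(g_1 v_0,g_2 v_0)$, together with the left-invariance of $\delta$.

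Next I would record two consequences of the hypotheses, assuming as usual that $\Gamma$ is connected. Cocompactness gives finitely many $G$-orbits of vertices; fixing representatives $f_1,\dots,f_m$ and setting $D=\max_j d_\Gamma(v_0,f_j)<\infty$, every vertex $w$ lies within $D$ of the orbit $G v_0$, and if moreover $d_\Gamma(v_0,w)\le R$ then writing $w=g f_j$ gives $d_\Gamma(v_0,g v_0)\le R+D$, so there are finitely many such $g$ by properness and hence finitely many such $w$; in particular $B_R(v_0)\cap\Gamma^0$ is finite and (taking $R=0$) the stabiliser $G_{v_0}$ is finite. Properness of $d_G$ is then immediate, since the ball $\{g:d_G(1,g)\le R\}$ maps into the finite set $B_R(v_0)\cap\Gamma^0$ under $\pi$ with fibres contained in cosets of $G_{v_0}$. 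The same two facts show $\pi$ is a rough isometry: reading $d_\Gamma(g_1 v_0,g_2 v_0)\le d_G(g_1,g_2)\le d_\Gamma(g_1 v_0,g_2 v_0)+1$ straight off the definition makes $\pi$ a $1$-rough isometric embedding, and $D$-density of $G v_0$ in $\Gamma^0$ makes $\pi$ roughly onto. This proves the final assertion.

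For rough geodesicity, choose a rough inverse $\iota\colon\Gamma^0\to G$ of $\pi$ by sending each vertex $w$ to a group element whose orbit point lies within $D$ of $w$; the estimates above show $\iota$ is a rough isometric embedding and $d_G\bigl(\iota(g v_0),g\bigr)$ is uniformly bounded. Given $g_1,g_2\in G$, let $w_0=g_1 v_0,\dots,w_L=g_2 v_0$ be the vertices of a geodesic of $\Gamma$ (so $d_\Gamma(w_i,w_j)=|i-j|$ and $L=d_\Gamma(g_1 v_0,g_2 v_0)$) and put $h_0=g_1$, $h_L=g_2$, $h_i=\iota(w_i)$ for $0<i<L$. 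Because $\iota$ is a rough isometric embedding and the endpoint corrections are bounded, $\bigl|d_G(h_i,h_j)-|i-j|\bigr|\le C_1$ with $C_1$ depending only on $D$ --- the crucial point, which uses the full strength of $\iota$ being a rough isometric embedding, not merely a map of bounded one-step displacement. Extending $i\mapsto h_i$ to a map $[0,L]\to G$ by $t\mapsto h_{\lfloor t\rfloor}$ and precomposing with the affine bijection $[0,d_G(g_1,g_2)]\to[0,L]$ --- which distorts the parameter by at most $1$, as $d_G(g_1,g_2)=L+\delta(g_1,g_2)$ differs from $L$ by at most $1$ --- yields a rough geodesic from $g_1$ to $g_2$ with a uniform constant.

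Finally, replacing $v_0$ by another basepoint $v_0'$ changes the $d_\Gamma$-summand by at most $2 d_\Gamma(v_0,v_0')$ by the triangle inequality and leaves $\delta$ unchanged, so the two metrics are roughly equivalent. I expect the only real obstacle to be the bookkeeping in the rough-geodesic step: one must exploit that the composite of $\iota$ with the vertex sequence of a $\Gamma$-geodesic is a rough isometric embedding (summing one-step displacements would give only a quasigeodesic) and must track the at-most-$1$ discrepancy between $L$ and $d_G(g_1,g_2)$ upon reparametrising onto $[0,d_G(g_1,g_2)]$. Everything else reduces to the standard fact that a group acting properly and cocompactly on a connected graph is, via any orbit map, roughly isometric to that graph.
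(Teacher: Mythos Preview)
Your proof is correct and follows essentially the same route as the paper: verify the metric axioms and left-invariance directly, use properness and cocompactness to show the orbit map is a rough isometry, and deduce rough geodesicity from this. The only difference is in the last step, where the paper simply invokes the fact that rough geodesicity is a rough isometry invariant, whereas you unpack this by explicitly constructing a rough inverse $\iota$ and pushing $\Gamma$-geodesics back to $G$; your construction is fine (the edge case $L=0$, $g_1\neq g_2$ is trivially handled separately), but the paper's one-line appeal to invariance is shorter and avoids the bookkeeping you flag at the end.
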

\begin{proof}
  The expression $d_{Y}(g_1\cdot y_0,g_2\cdot y_0)$ defines a
  pseudometric $d'_G$ on $G$.  Then $d_G = d'_G + \delta$ is a metric
  since the sum of a pseudometric and a metric is a metric.  Since $G$
  acts by isometries on $Y$, the pseudometric $d'_G$ is left-invariant
  on $G$ and thus the metric $d_G$ is also left-invariant.  Since the
  $G$-action on $Y$ is metrically proper, balls in $(G, d_G)$ are
  finite and so $d_G$ is a proper metric.

  Since
  \[ \bigl| d_G(g_1, g_2) - d_{Y}(g_1\cdot y_0,g_2\cdot y_0) \bigr| =
    \delta(g_1, g_2) \le 1 \] the map $g \mapsto g \cdot y_0$ is a
  rough isometric embedding.  The $G$-action on $Y$ is cobounded, so
  the map $g \mapsto g \cdot y_0$ is also roughly onto and is thus a
  rough isometry.  Then, since rough geodesicity is a rough isometry
  invariant, the metric $d_G$ is roughly geodesic.

  Finally, if $y_1 \in Y$ is a different basepoint and $d^1_G$
  is the resulting metric on $G$, we have
  \begin{align*}
    d^1_G(g_1, g_2)
    &= d_{Y}(g_1\cdot y_1,g_2\cdot y_1) + \delta(g_1, g_2) \\
    &\le d_{Y}(g_1\cdot y_1, g_1\cdot y_0)
      + d_{Y}(g_1\cdot y_0,g_2\cdot y_0)
      + d_{Y}(g_2\cdot y_0, g_2\cdot y_1) + \delta(g_1, g_2) \\
    &= d_{Y}(y_1, y_0)
      + d_{Y}(g_1\cdot y_0,g_2\cdot y_0) + \delta(g_1, g_2)
      + d_{Y}(y_0, y_1) \\
    &= d_G(g_1, g_2)
      + 2d_{Y}(y_0, y_1)
  \end{align*}
  and by a symmetric argument we also have
  $d_G(g_1, g_2) \le d^1_G(g_1, g_2) + 2d_{Y}(y_0, y_1)$ so $d_G$
  and $d^1_G$ are roughly equivalent.
\end{proof}

\begin{lem}
  \lemlabel{zmetric} Let $d$ be a metric on $\Z$ such that the
  identity map is a $(K,A)$-quasi-isometry from $\Z$ with the standard
  metric to $(\Z, d)$.  Assume that $d$ is roughly invariant in the
  sense that there exists a uniform $C$ such that
  \[ d(m,n) - C \le d(k+m,k+n) \le d(m,n) + C \] for any
  $k,m,n \in \Z$.  Then $\frac{d(0,n)}{|n|}$ tends to a limit
  $L \in \bigl[\frac{1}{K},K\bigr]$ as $n$ tends to $\pm \infty$.
\end{lem}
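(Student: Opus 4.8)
Set $a_n = d(0,n)$ for $n \ge 1$. The plan is to recognise $(a_n)$ as an almost-subadditive sequence, apply Fekete's subadditivity lemma, and then transfer the conclusion to $n \to -\infty$ using rough invariance and the symmetry of $d$.

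First I would combine the triangle inequality with rough invariance to show that $(a_n)$ is subadditive up to the constant $C$: for $m,n \ge 1$,
\[ a_{m+n} = d(0,m+n) \le d(0,m) + d(m,m+n) \le a_m + a_n + C, \]
where the final inequality is the instance $d(m,m+n) \le d(0,n) + C$ of rough invariance, translating by $k = m$. Hence $b_n = a_n + C$ satisfies $b_{m+n} \le b_m + b_n$, so by Fekete's lemma $b_n/n$ converges to $\inf_{n \ge 1} b_n/n$. This infimum is finite, since the quasi-isometry hypothesis gives $b_n/n \ge \tfrac1K - \tfrac{A-C}{n}$, which is bounded below for $n \ge 1$. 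Because $a_n/n$ and $b_n/n$ differ by $C/n \to 0$, the quotient $d(0,n)/|n| = a_n/n$ tends to a limit $L$ as $n \to +\infty$.

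Next I would handle $n \to -\infty$. Applying rough invariance to the pair $0,-n$ with translation by $k = n$ gives $\bigl| d(n,0) - d(0,-n) \bigr| \le C$, and since $d$ is a metric $d(n,0) = d(0,n)$; hence $\bigl| d(0,n) - d(0,-n) \bigr| \le C$, so dividing by $n$ shows $d(0,-n)/n \to L$, i.e.\ $d(0,m)/|m| \to L$ as $m \to -\infty$ as well. Therefore $d(0,n)/|n| \to L$ as $n \to \pm\infty$.

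Finally, the quasi-isometry hypothesis unpacks to $\tfrac1K |n| - A \le d(0,n) \le K|n| + A$ for all $n$, whence $\tfrac1K - \tfrac{A}{|n|} \le d(0,n)/|n| \le K + \tfrac{A}{|n|}$; letting $|n| \to \infty$ gives $L \in \bigl[\tfrac1K, K\bigr]$. The only step requiring a little care is the passage to the almost-subadditive form of Fekete's lemma, but this is dispatched once and for all by the shift $b_n = a_n + C$, after which the classical statement applies verbatim; everything else is just the triangle inequality and a single use of rough invariance.
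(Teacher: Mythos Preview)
Your proof is correct and follows essentially the same route as the paper: both reduce to $n>0$ via rough invariance and symmetry of $d$, both bound $L$ using the quasi-isometry constants, and both extract the limit from the almost-subadditivity $a_{m+n}\le a_m+a_n+C$ coming from the triangle inequality plus one application of rough invariance. The only difference is packaging---you invoke Fekete's lemma after the shift $b_n=a_n+C$, whereas the paper carries out the $\limsup\le\liminf$ argument by hand, which is exactly the standard proof of Fekete.
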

\begin{proof}
  By rough invariance
  \[ \frac{d(0,|n|)}{|n|} - \frac{C}{|n|} \le \frac{d(0,n)}{|n|} \le
    \frac{d(0,|n|)}{|n|} + \frac{C}{|n|} \] so it suffices to consider
  the case where $n \to +\infty$.

  We have
  \[ \frac{n}{K} - A \le d(0,n) \le Kn + A \] so that
  \[ \frac{1}{K} - \frac{A}{n} \le \frac{d(0,n)}{n} \le K +
    \frac{A}{n} \] and so
  \[ \frac{1}{K} \le \liminf_n \frac{d(0,n)}{n} \le \limsup_n
    \frac{d(0,n)}{n} \le K \] and it remains only to prove that
  $\liminf_n \frac{d(0,n)}{n} \ge \limsup_n \frac{d(0,n)}{n}$.

  Let $\epsilon > 0$.  Take $m$ large enough that
  $m < \frac{1}{\epsilon}$ and
  $\frac{d(0,m)}{m} < \liminf_n \frac{d(0,n)}{n} + \epsilon$.  Let
  $M = \max_{0 \le n \le m}d(0,m)$.  Then for any positive $n$ we have
  \begin{align*}
    d(0,n) &\le d(0,m) + d(m,2m) + \cdots + d\bigl((q-1)m,qm\bigr) + M \\
           &\le q\bigl(d(0,m) + C\bigr) + M \\
           &\le \frac{n}{m}\bigl(d(0,m) + C\bigr) + M
  \end{align*}
  where $q = \bigl\lfloor \frac{n}{m} \bigr\rfloor$.  So
  \[ \frac{d(0,n)}{n} \le \frac{d(0,m)}{m} + \frac{C}{m} + \frac{M}{n}
    < \liminf_n \frac{d(0,n)}{n} + \epsilon + C\epsilon +
    \frac{M}{n} \] which implies
  $\limsup_n \frac{d(0,n)}{n} \le \liminf_n \frac{d(0,n)}{n} +
  (1+C)\epsilon$.  But $\epsilon$ was chosen arbitrarily so we have
  $\limsup_n \frac{d(0,n)}{n} \le \liminf_n \frac{d(0,n)}{n}$ as
  needed.
\end{proof}

The following theorem was proved by Fujiwara \cite{Fujiwara:2015} by
modifying a proof of Burago \cite{Burago:1992}.

\begin{thm}[{Burago, Fujiwara \cite[Corollary~3.3]{Fujiwara:2015}}]
  \thmlabel{fujiwara} Let $d_1$ and $d_2$ be two proper, roughly
  goedesic, left-invariant metrics on $\Z^n$.  If
  $\lim_{m\to\infty}\frac{d_1(0,mz)}{d_2(0,mz)} = 1$ for every
  $z \in \Z^n$ then $d_1$ and $d_2$ are roughly equivalent.
\end{thm}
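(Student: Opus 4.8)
The plan is to reduce the statement to Burago's theorem that a $\Z^n$-periodic metric lies within bounded distance of its stable norm; the reduction itself is formal, and that step is the only real difficulty.

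First I would extract directional growth rates from the two metrics. Since $d_i$ is proper, roughly geodesic and left-invariant, the identity map from $\Z^n$ with any word metric $|\cdot|$ to $(\Z^n,d_i)$ is a quasi-isometry; fix constants $(K,A)$ that work for both $i=1,2$. For a nonzero $z\in\Z^n$, the restriction of $d_i$ to the infinite cyclic subgroup $\langle z\rangle$ is exactly left-invariant, and $\langle z\rangle$ with this metric is quasi-isometric to $\Z$ with the standard metric (via $m\mapsto mz$), so \Lemref{zmetric} applies: the limit
\[ \|z\|_i \;:=\; \lim_{m\to\infty}\frac{d_i(0,mz)}{|m|} \]
exists and is finite, and the quasi-isometry bounds give $\tfrac{1}{K}|z|\le\|z\|_i\le K|z|$; put $\|0\|_i=0$. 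Since $d_2(0,mz)\to\infty$ we may write $d_1(0,mz)/d_2(0,mz)=\bigl(d_1(0,mz)/m\bigr)/\bigl(d_2(0,mz)/m\bigr)$, which tends to $\|z\|_1/\|z\|_2$; thus the hypothesis says exactly that $\|z\|_1=\|z\|_2=:\|z\|$ for every $z\in\Z^n$.

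Next I would note that $\|\cdot\|$ is, on $\Z^n$, the restriction of a norm on $\R^n$. It is $\Z$-homogeneous by construction, and it is subadditive: by the triangle inequality and left-invariance,
\[ d_i\bigl(0,m(z+w)\bigr)\le d_i(0,mz)+d_i\bigl(mz,m(z+w)\bigr)=d_i(0,mz)+d_i(0,mw), \]
and dividing by $m$ and letting $m\to\infty$ gives $\|z+w\|\le\|z\|+\|w\|$. A $\Z$-homogeneous, subadditive function on $\Z^n$ with $\tfrac{1}{K}|z|\le\|z\|\le K|z|$ extends uniquely and $\Q$-homogeneously over $\Q^n$, where the same bounds make it Lipschitz, hence extends continuously to a norm on $\R^n$, still written $\|\cdot\|$. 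In particular $(x,y)\mapsto\|x-y\|$ is itself a proper, roughly geodesic, left-invariant metric on $\Z^n$ whose stable norm is $\|\cdot\|$.

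It remains to show that each $d_i$ is roughly equivalent to this norm metric, i.e.\ that $d_i(0,z)-\|z\|$ is bounded independently of $z$; this is the crux. One inequality is immediate: iterating the triangle inequality with left-invariance gives $d_i(0,mz)\le m\,d_i(0,z)$, so $\|z\|\le d_i(0,z)$ for all $z$. The reverse bound $d_i(0,z)\le\|z\|+B_i$ with $B_i$ independent of $z$ is precisely Burago's theorem \cite{Burago:1992}, in the form adapted by Fujiwara \cite{Fujiwara:2015}, and this is the hard part: in outline one takes a $d_i$-rough geodesic from $0$ to $z$, cuts it at lattice points into bounded displacement vectors, and runs a pigeonhole argument over the finitely many $\Z^n$-translation classes of such displacements to see that the accumulated $d_i$-length exceeds the nearly additive quantity $\|\cdot\|$ read off along the broken path by at most a bounded amount. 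Granting this, for every $z\in\Z^n$
\[ \bigl|d_1(0,z)-d_2(0,z)\bigr|\le\bigl|d_1(0,z)-\|z\|\bigr|+\bigl|\|z\|-d_2(0,z)\bigr|\le B_1+B_2, \]
and translating by an arbitrary $g$ using left-invariance shows $|d_1(g,h)-d_2(g,h)|\le B_1+B_2$ for all $g,h\in\Z^n$, which is the asserted rough equivalence. Thus the whole argument is soft except for the Burago step, which is where I expect all the work to lie.
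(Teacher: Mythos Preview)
The paper does not prove this theorem at all: it is stated with attribution to Burago and Fujiwara and used as a black box in the proof of \Propref{mtonorm}. So there is no ``paper's own proof'' to compare against.

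That said, your outline is the standard reduction and is essentially correct. You correctly extract the stable norm $\|z\|_i=\lim_m d_i(0,mz)/m$ (your appeal to \Lemref{zmetric} is fine, and in fact overkill, since on the cyclic subgroup $\langle z\rangle$ the metric is exactly left-invariant and plain subadditivity already gives the limit); you correctly read the hypothesis as $\|\cdot\|_1=\|\cdot\|_2$; the extension of $\|\cdot\|$ to a genuine norm on $\R^n$ is routine; and the inequality $\|z\|\le d_i(0,z)$ is immediate. You are also right that the entire content of the theorem lives in the reverse bound $d_i(0,z)\le\|z\|+B_i$, and that this is precisely Burago's theorem in Fujiwara's discrete adaptation.

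The one place to be careful is your thumbnail of Burago's argument. The actual proof is not a pigeonhole over finitely many translation classes of displacement vectors; that would at best give a bound linear in the number of steps, not a uniform additive constant. Burago's idea is a coarse quasi-concavity: one shows that the defect $f(z)=d_i(0,z)-\|z\|$ satisfies an inequality of the form $f(z)\le\tfrac12\bigl(f(z')+f(z'')\bigr)+C$ whenever $z$ is a coarse midpoint of $z'$ and $z''$ along a rough geodesic, and then iterates a dyadic subdivision to force $f$ bounded. Fujiwara's contribution is to make this work for roughly geodesic left-invariant metrics on $\Z^n$ rather than periodic Riemannian metrics on $\R^n$. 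Your framing is fine as a reduction, but if you intend to actually supply the Burago step you should replace the pigeonhole sketch with this midpoint/quasi-concavity mechanism.
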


\begin{prop}
  \proplabel{mtonorm} Let $d$ be a proper, roughly geodesic,
  left-invariant metric on $\Z^n$.  Then there exists a norm
  $|\cdot|_d$ on $\R^n$ such that the inclusion
  $(\Z^n,d) \hookrightarrow (\R^n, |\cdot|_d)$ is a rough
  isometry.

  Moreover, if $f \in \GL_n(\Z)$ is a rough isometry of $(\Z^n,d)$
  then $f$, viewed as an element of $\GL_n(\R)$, is an isometric
  automorphism of $(\R^n,|\cdot|_d)$.
\end{prop}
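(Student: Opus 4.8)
The plan is to define $|\cdot|_d$ as the \emph{stable norm} of $d$ and then identify $(\Z^n,d)$ with $\Z^n$ sitting inside the resulting normed space. For $z\in\Z^n$, left-invariance of $d$ together with the triangle inequality gives $d\bigl(0,(m+m')z\bigr)\le d(0,mz)+d(0,m'z)$, so $m\mapsto d(0,mz)$ is subadditive and, by Fekete's lemma,
\[ |z|_d:=\lim_{m\to\infty}\frac{d(0,mz)}{m}=\inf_{m\ge 1}\frac{d(0,mz)}{m} \]
exists in $[0,\infty)$; the same two facts applied to the standard basis vectors $e_1,\dots,e_n$ give $d(0,z)\le K_0|z|_1$ with $K_0:=\max_i d(0,e_i)$, so $|z|_d\le K_0|z|_1$. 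For a matching lower bound I would use that $d$ is proper and $C_0$-roughly geodesic for some $C_0$: a $C_0$-rough geodesic from $0$ to $z$ writes $z$ as a sum of at most $d(0,z)+1$ elements of the finite set $S:=\{w\in\Z^n: d(0,w)\le 1+C_0\}$, so $|z|_1\le\bigl(d(0,z)+1\bigr)M$ with $M:=\max_{w\in S}|w|_1$, hence $|z|_d\ge|z|_1/M$. (Alternatively, existence and positivity of the limit follow by applying \Lemref{zmetric} to the metric $(m,n)\mapsto d(mz,nz)$ on $\Z$.) From its definition $|\cdot|_d$ is subadditive, even (using $d(0,-mz)=d(0,mz)$) and integer-homogeneous on $\Z^n$, so it extends to $\Q^n$ by positive homogeneity; since it is then Lipschitz for $|\cdot|_1$, because $\bigl||z|_d-|z'|_d\bigr|\le|z-z'|_d\le K_0|z-z'|_1$, it extends uniquely and continuously to $\R^n$, where the two bounds above show it is a norm.

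Next I would show that the inclusion $\iota\colon(\Z^n,d)\hookrightarrow(\R^n,|\cdot|_d)$ is a rough isometry. It is coarsely onto because $\Z^n$ is coarsely dense in $\R^n$ for $|\cdot|_1$, hence for $|\cdot|_d$. The real content is the uniform additive estimate $\bigl|d(z_1,z_2)-|z_1-z_2|_d\bigr|\le C$, which by left-invariance of both metrics reduces to $\bigl|d(0,z)-|z|_d\bigr|\le C$ uniformly in $z\in\Z^n$. To obtain it I would invoke \Thmref{fujiwara} with $d_1:=d$ and $d_2$ the restriction of $|\cdot|_d$ to $\Z^n$; the latter is a proper (since $|z|_d\ge|z|_1/M$), left-invariant metric on $\Z^n$, and it is roughly geodesic because $(\R^n,|\cdot|_d)$ is a geodesic space in which $\Z^n$ is coarsely dense. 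By construction $d_1(0,mz)/d_2(0,mz)=d(0,mz)/(m|z|_d)\to 1$ for every nonzero $z$, so $d_1$ and $d_2$ are roughly equivalent, which is exactly the desired bound. I expect this to be the main obstacle: the ray-wise convergence $d(0,mz)/|mz|_d\to 1$ is elementary, but upgrading it to a single uniform additive bound is precisely the Burago--Fujiwara theorem.

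Finally, let $f\in\GL_n(\Z)$ be a $C'$-rough isometry of $(\Z^n,d)$, so $\bigl|d(a,b)-d(fa,fb)\bigr|\le C'$ for all $a,b\in\Z^n$. Since $f$ is linear it fixes $0$ and satisfies $f(mz)=m\,f(z)$, so taking $a=0$ and $b=mz$ gives $\bigl|d(0,mz)-d\bigl(0,m\,f(z)\bigr)\bigr|\le C'$ for every $z\in\Z^n$ and every $m\ge 1$. Dividing by $m$ and letting $m\to\infty$ yields $|f(z)|_d=|z|_d$ for all $z\in\Z^n$, and hence for all $z\in\R^n$ by positive homogeneity and continuity. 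As $f$ is a linear automorphism of $\R^n$ that preserves $|\cdot|_d$, it is an isometric automorphism of $\bigl(\R^n,|\cdot|_d\bigr)$, which completes the proof.
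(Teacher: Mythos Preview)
Your argument is correct and follows the same overall strategy as the paper: define a ``stable'' norm on $\R^n$ from $d$ and then invoke \Thmref{fujiwara} to upgrade the obvious direction-by-direction asymptotics to a uniform additive bound. Where you differ is in the construction of $|\cdot|_d$. You work on $\Z^n$ first, using Fekete subadditivity to define $|z|_d=\lim_m d(0,mz)/m$, check the norm axioms there, and then extend to $\R^n$ by homogeneity and Lipschitz continuity. The paper instead defines $|\cdot|_d$ directly on all of $\R^n$: for each Euclidean unit vector $s$ it picks an ``integral sequence'' $(z_m)_m$ with $|z_m-ms|_2$ bounded, applies \Lemref{zmetric} to the pulled-back metric on $\Z$ to obtain $L_{\pm s}=\lim_m d(0,z_m)/|m|$, and sets $|r|_d=L_{\pm s}|r|_2$; it then has to verify well-definedness of $L_{\pm s}$ and the triangle inequality by hand via explicit estimates. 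Your route is shorter and avoids the integral-sequence bookkeeping, at the (mild) cost of having to argue that the extension from $\Z^n$ to $\R^n$ is well behaved; the paper's route handles irrational directions from the outset and makes the link to \Lemref{zmetric} explicit. Similarly, for the ``moreover'' clause your argument (divide $|d(0,mz)-d(0,mf(z))|\le C'$ by $m$ and pass to the limit on $\Z^n$, then extend) is more direct than the paper's, which tracks how $f$ transports integral sequences for $s$ to integral sequences for $f(s)/|f(s)|_2$ and compares the resulting limits $L_{\pm s}$ and $L_{\pm s'}$.
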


\begin{rmk*}
  \Propref{mtonorm} implies that any asymptotic cone of $(\Z^n,d)$ is
  isometric to $(\R^n,|\cdot|_d)$, regardless of the choice of
  ultrafilter, basepoint sequence or scaling sequence.
\end{rmk*}

\begin{proof}[Proof of \Propref{mtonorm}]
  By the Milnor-Schwarz Lemma, the inclusion
  $(\Z^n,d) \hookrightarrow (\R^n,|\cdot|_2)$ is a
  $(K,A)$-quasi-isometry for some $K$ and $A$.  Let $s \in \R^n$ with
  $|s|_2 = 1$.  For each $m \in \Z$ choose $z_m \in \Z^n$ with
  $|z_m - ms|_2 \le C$ for some uniform $C$.  Call $(z_m)_m$ an
  \defterm{integral sequence} for $s$.  The expression
  \[ d_{\Z}(m,k) = d(z_m,z_k) + \delta(m, k) \] defines a metric
  $d_{\Z}$ on $\Z$.  We claim that (I) the identity map on $\Z$ is a
  quasi-isometry from $\Z$ to $(\Z,d_{\Z})$ with multiplicative
  constant $K$ and that (II) $d_{\Z}$ is roughly invariant so that we
  may apply \Lemref{zmetric}.

  To prove (I), we extend $ms \mapsto z_m$ to a quasi-inverse $\phi$
  of the inclusion $(\Z^n,d) \hookrightarrow (\R^n, |\cdot|_2)$ by
  arbitrarily choosing $\phi(x)$ in
  $\Z^n \cap B_{\frac{\sqrt{n}}{2}}(x)$.  Then $\phi$ is a
  quasi-isometry with the same multiplicative constant $K$ as the
  inclusion $(\Z^n,d) \hookrightarrow (\R^n, |\cdot|_2)$.  The map
  $\Z \to \{z_m \sth m \in \Z\}$ given by $m \mapsto z_m$ is the
  composition of the isometric embedding $\Z \to (\R^n, |\cdot|_2)$
  given by $m \mapsto ms$ with the quasi-isometry $\phi$.  Then
  $m \mapsto z_m$ is a quasi-isometry from $\Z$ to the subspace
  $\{z_m \sth m \in \Z\}$ of $(\Z^n,d)$.  The metric $d_{\Z}$ is, up
  to rough equivalence, pulled back along this quasi-isometry and so
  we have (I).

  To prove (II), take $m, k, \ell \in \Z$.  It suffices to show that
  \[ d(z_{m+\ell},z_{k+\ell}) \le d(z_m,z_k) + B \] for some uniform
  constant $B$.  We will first show that the distance between
  $z_{k+\ell} - z_{m+\ell}$ and $z_k - z_m$ is uniformly bounded.
  We have that $z_{k+\ell} - z_{m+\ell}$ is equal to
  \[ \bigl(z_{k+\ell} - (k+\ell)s\bigr) + \bigl((k+\ell)s -
    (m+\ell)s\bigr) + ((m+\ell)s - z_{m+\ell}) \] and $z_k - z_m$ is
  equal to \[ (z_k - ks\bigr) + (ks - ms) + (ms - z_m) \] but
  $\bigl((k+\ell)s - (m+\ell)s\bigr) = (ks - ms)$ and the remaining
  terms have norm bounded by $C$.  Hence
  \[ \bigl|(z_{k+\ell} - z_{m+\ell}) - (z_k - z_m)\bigr|_2 \le 4C \]
  and so
  \begin{align*}
    d(z_{m+\ell},z_{k+\ell}) &\le d(z_{m+\ell}, z_k + z_{m+\ell} - z_m)
    + d(z_k + z_{m+\ell} - z_m, z_{k+\ell}) \\
    &= d(z_m, z_k) + d(z_k - z_m, z_{k+\ell} - z_{m+\ell}) \\
    &\le d(z_m, z_k) + K\bigl|(z_{k+\ell} - z_{m+\ell})
      - (z_k - z_m)\bigr|_2 + KA \\
    &\le d(z_m, z_k) + 4KC + KA
  \end{align*}
  so we can take $B = 4KC + KA$.

  Applying \Lemref{zmetric} to $(\Z,d_{\Z})$ we see that
  $\frac{d(0,z_m)}{|m|}$ tends to a limit
  $L_{(z_m)_m} \in \bigl[\frac{1}{K},K\bigr]$ as $m$ tends to
  $\pm \infty$.  This limit depends only on $s$.  Indeed, given a
  different integral sequence $(z'_m)_{m\in \Z}$, letting
  $(z''_m)_{m\in \Z}$ be the integral sequence
  \[
    (\ldots,z_{-4},z'_{-3},z_{-2},z'_{-1}z_0,z'_1,z_2,z'_3,z_4,\ldots) \]
  we see that $L_{(z_m)_m} = L_{(z''_m)_m} = L_{(z'_m)_m}$.  Moreover,
  the limit depends only on $\{\pm s\}$ since $(z_{-m})_m$ is an
  integral sequence for $-s$ and $L_{(z_{-m})_m} = L_{(z_m)_m}$.  We
  will write $L_{\pm s}$ for $L_{(z_m)_m}$.

  For $r \in \R^n \setminus \{0\}$ set $|r|_d = L_{\pm s}|r|_2$, where
  $s = \frac{r}{|r|_2}$.  For $0 \in \R^n$ set $|0|_d = 0$.  We claim
  that $|\cdot|_d$ is a norm on $\R^n$.  We need to prove that, for
  any $r,r' \in \R^n$ and $\alpha \in \R$, the following properties
  hold.
  \begin{enumerate}
  \item \itmlabel{nonneg} $|r|_d \ge 0$
  \item \itmlabel{zero} $|r|_d = 0$ iff $r = 0$
  \item \itmlabel{mult} $|\alpha r|_d = |\alpha| |r|_d$
  \item \itmlabel{triangle} $|r + r'|_d \le |r|_d + |r'|_d$
  \end{enumerate}
  Properties \pitmref{nonneg}, \pitmref{zero} and \pitmref{mult}
  follow immediately from the properties of $L_{\pm s}$ and the
  definition of $|\cdot|_d$.  We will prove
  Property~\pitmref{triangle}.  Define $s$, $s'$ and $s''$ by
  \[ s = \frac{r}{|r|_2} \qquad s' = \frac{r'}{|r'|_2} \qquad s'' =
    \frac{r+r'}{|r+r'|_2}\] and choose integral sequences $(z_m)_m$,
  $(z'_m)_m$ and $(z''_m)_m$ for $s$, $s'$ and $s''$ as above.  For
  each $m \in \N$ choose $a_m$ and $a'_m$ so that the following
  inequalities hold.
  \[ \biggl|\frac{|r|_2}{|r+r'|_2} - \frac{a_m}{m}\biggr|_2 \le \frac{1}{m}
    \qquad
    \biggl|\frac{|r'|_2}{|r+r'|_2} - \frac{a'_m}{m}\biggr|_2 \le \frac{1}{m} \]
  Then
  \begin{align*}
    ms'' &= \frac{m(r+r')}{|r+r'|_2} \\
         &= \frac{m\bigl(s|r|_2+s'|r'|_2\bigr)}{|r+r'|_2} \\
         &= \frac{|r|_2}{|r+r'|_2}ms + \frac{|r'|_2}{|r+r'|_2}ms' \\
         &= a_ms + a'_ms'
           + \biggl(\frac{|r|_2}{|r+r'|_2} - \frac{a_m}{m}\biggr)ms
           + \biggl(\frac{|r'|_2}{|r+r'|_2} - \frac{a'_m}{m}\biggr)ms'
  \end{align*}
  and so $\bigl|ms'' - (a_ms + a'_ms')\bigr|_2 \le 2$.  Then, since
  \[ z''_m - (z_{a_m} + z'_{a'_m}) = \bigl(ms'' - (a_ms+ a'_ms')\bigr) + (z''_m -
    ms'') - (z_{a_m} - a_ms) - (z'_{a'_m} - a'_ms') \] we have
  \[ \bigl|z''_m - (z_{a_m} + z'_{a'_m})\bigr|_2 \le 2 + 3C \] and so
  \begin{align*}
    d(0,z''_m) &\le d(0,z_{a_m}) + d(z_{a_m},z_{a_m}+ z'_{a'_m})
    + d(z_{a_m} + z'_{a'_m}, z''_m) \\
    &= d(0,z_{a_m}) + d(0,z'_{a'_m}) + d(z_{a_m}+z'_{a'_m}, z''_m) \\
    &\le d(0,z_{a_m}) + d(0,z'_{a'_m})
      + K\bigl|z''_m - (z_{a_m}+z'_{a'_m})\bigr|_2 + KA \\
    &\le d(0,z_{a_m}) + d(0,z'_{a'_m})
      + K(2 + 3C + A)
  \end{align*}
  which will allow us to estimate $L_{\pm s''}$ using $L_{\pm s}$ and
  $L_{\pm s'}$.  Dividing both sides of this inequality by $m$ we
  have
  \[ \frac{d(0,z''_m)}{m} \le
    \frac{a_m}{m}\cdot\frac{d(0,z_{a_m})}{a_m} +
    \frac{a'_m}{m}\cdot\frac{d(0,z_{a'_m})}{a'_m} + \frac{K(2 + 3C +
      A)}{m} \] and taking the limit as $m$ tends to $\infty$ we have
  \[ L_{\pm s''} \le \frac{|r|_2}{|r+r'|_2}L_{\pm s} +
    \frac{|r'|_2}{|r+r'|_2}L_{\pm s'} \] and so
  $|r+r'|_d = L_{\pm s''}|r+r'|_2 \le L_{\pm s}|r|_2 + L_{\pm
    s'}|r'|_2 = |r|_d + |r'|_d$ thus proving
  Property~\pitmref{triangle}.

  We prove now that the inclusion
  $(\Z^n,d) \hookrightarrow (\R^n,|\cdot|_d)$ is a rough isometry.
  This will follow from \Thmref{fujiwara} once we show that
  $\lim_{k\to\infty} \frac{d(0,kz)}{|kz|_d} = 1$ for
  $z \in \Z^n \setminus \{0\}$.  Since
  $|kz|_d = k|z|_d = kL_{\pm s}|z|_2$, where $s = \frac{z}{|z|_2}$, it
  suffices to show that $\frac{d(0,kz)}{k|z|_2}$ tends to $L_{\pm s}$
  as $k$ tends to $\infty$.  We define an integral sequence
  $(z_m)_{m\in\Z}$ for $s$ by setting $z_m = kz$ when
  $m = \lfloor k|z|_2\rfloor$ and arbitrarily choosing
  $z_m \in B_{\frac{\sqrt{n}}{2}}(ms)\cap\Z^n$ when
  $m \notin \bigl\{\lfloor k|z|_2\rfloor \sth k \in \Z\bigr\}$.  Then
  $\Bigl(\frac{d(0,kz)}{\lfloor k|z|_2\rfloor}\Bigr)_k$ is a
  subsequence of $\Bigl(\frac{d(0,z_m)}{m}\Bigr)_m$ and so tends to
  the same limit $L_{\pm s}$.  But
  $\lim_k \frac{k|z|_2}{\lfloor k|z|_2\rfloor} = 1$ and so we have
  $\lim_k \frac{d(0,kz)}{k|z|_2} = L_{\pm s}$ as required.

  It remains to prove that if $f \in \GL_n(\Z)$ is a rough isometry of
  $(\Z^n,d)$ that $f$, viewed as an element of $\GL_n(\R)$, is an
  isometry of $\bigl(\R^n,|\cdot|_d\bigr)$.  Take $r \in \R^n$, let
  $s = \frac{r}{|r|_2}$, let $s' = \frac{f(r)}{|f(r)|_2}$ and let
  $(z_m)_m$ be an integral sequence for $s$.  Since $f$ is a
  bilipschitz map from $\bigl(\R^n,|\cdot|_2\bigr)$ to itself there is
  a uniform bound $C$ on $\bigl|f(z_m-ms)\bigr|_2$.  Since
  $r = |r|_2s$, we have $s' = \frac{f(s)}{|f(s)|_2}$ and so
  \[ f(z_m-ms) = f(z_m) - mf(s) = f(z_m) - m\bigl|f(s)\bigr|_2s' \]
  Let $k_m = \bigl\lfloor m|f(s)|_2\bigr\rfloor$.  Then
  \[ \bigl|m|f(s)|_2s' - k_ms'\bigr|_2 = \bigl|m|f(s)|_2 - k_m\bigr|
    \le 1 \] and so $\bigl|f(z_m) - k_ms'\bigr|_2 \le C + 1$ for all
  $m \in \Z$.  Hence there is an integral sequence $(z'_k)_k$ for $s'$
  obtained by setting $z'_k = z_m$ when $k = k_m$ and then extending
  to all $k$.  Then $(z'_{k_m})_m$ is a subsequence of $(z'_k)_k$ so
  we have
  \[ L_{\pm s'} = \lim_{m\to\infty} \frac{d(0,z'_{k_m})}{k_m} =
    \lim_{m\to\infty} \frac{d(0,z_m)}{\bigl\lfloor
      m|f(s)|_2\bigr\rfloor} = \lim_{m\to\infty}
    \frac{d(0,z_m)}{m|f(s)|_2} = \frac{L_{\pm s}}{|f(s)|_2}\] so we
  have
  $\bigl|f(r)\bigr|_d = L_{\pm s'}\bigl|f(r)\bigr|_2 = L_{\pm
    s}\frac{|f(r)|_2}{|f(s)|_2} = L_{\pm
    s}\frac{|r|_2|f(s)|_2}{|f(s)|_2} = |r|_d$.
\end{proof}

\section{Asymptotic cones and coarsely injective spaces}
\seclabel{ascones_and_coarse_injective}

In this section we define and review basic properties of asymptotic
cones.  We then prove that any asymptotic cone of a Helly graph is
countably hyperconvex.  In fact, we prove something stronger: that the
asymptotic cone of a coarsely injective space is countably
hyperconvex.

A metric space $X$ is \defterm{$C$-coarsely injective} if, for any
family $\{(x_i,r_i)\}_{i \in I}$ of pairs
$(x_i,r_i) \in X \times \R_{\ge 0}$ for which
$d(x_i,x_{i'}) \le r_i + r_{i'}$ for all $i,i' \in I$, there exists
$x \in X$ for which $d(x,x_i) \le r_i + C$ for all $i \in I$.  A
metric space is \defterm{injective} if it is $0$-coarsely injective.
A metric space is \defterm{countably hyperconvex} if it satisfies the
$0$-coarse injectivity condition for families
$\{(x_i,r_i)\}_{i \in I}$ that are countable.  A graph $\Gamma$ is
\defterm{Helly} if its vertex set $\Gamma^0$ endowed with the graph
metric satisfies the $0$-coarse injectivity condition for families
$\{(x_i,r_i)\}_{i \in I}$ whose every $r_i$ is an integer.

\begin{rmk*}
  Any injective metric space is countably hyperconvex.  Any countably
  hyperconvex metric space is complete and geodesic.  Any coarsely
  injective metric space is roughly geodesic.
\end{rmk*}
  
\begin{rmk*}
  Any Helly graph is $1$-coarsely injective.
\end{rmk*}

For an exposition on asymptotic cones of metric spaces see Drutu and
Kapovich \cite[Chapter~10]{Drutu:2018}.  Let $(X,d)$ be a metric
space.  Let $\mathscr{U}$ be a nonprincipal ultrafilter on $\N$.  Let
$(b^{(n)})_{n\in\N}$ be a sequence in $X$.  Let $(s^{(n)})_{n\in\N}$
be a sequence of positive reals such that $s^{(n)} \to \infty$ as
$n \to \infty$.  Consider the set
\[ X^{\N}_{(b^{(n)})_n,(s^{(n)})_n} = \Biggl\{(x_n)_{n\in\N} \sth
  \text{$\biggl(\frac{d(x_n,b^{(n)})}{s^{(n)}}\biggr)_{n\in\N}$ is
    bounded} \Biggr\} \] of \defterm{sequences in $X$ that are bounded
  with respect to} the \defterm{basepoint sequence} $(b^{(n)})$ and
the \defterm{scaling sequence} $(s^{(n)})_n$.  For
$(x_n)_n, (x'_n)_n \in X^{\N}_{(b^{(n)})_n,(s^{(n)})_n}$,
\[ d\bigl((x_n)_n,(x'_n)_n\bigr) = \lim_{\mathscr{U}}
  \frac{d(x_n,x'_n)}{s^{(n)}} \] defines a pseudometric on
$X^{\N}_{(b^{(n)})_n,(s^{(n)})_n}$.  The \defterm{asymptotic cone} of
$X$ with respect to $\mathscr{U}$ and the \defterm{basepoint sequence}
$(b^{(n)})_n$ and the \defterm{scaling sequence} $(s^{(n)})_n$ is the
metric space
$\AsCone_{\mathscr{U}}\bigl(X,(b^{(n)})_n,(s^{(n)})_n\bigr)$ obtained
from $(X^{\N}_{(b^{(n)})_n,(s^{(n)})_n}, d)$ by identifying $(x_n)_n$
and $(x'_n)_n$ whenever $d\bigl((x_n)_n,(x'_n)_n\bigr) = 0$.

We will need the following three propositions, which are consequences
of theorems 10.46 and 10.83 in Drutu and Kapovich \cite{Drutu:2018}.

\begin{prop}
  \proplabel{scaleinv} Let $X$ be a scale-invariant proper geodesic
  metric space with transitive isometry group.  Then every asymptotic
  cone of $X$ is isometric to $X$.
\end{prop}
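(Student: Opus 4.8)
The plan is to recognise the asymptotic cone as an ultralimit of a constant sequence of pointed spaces and then read off the conclusion from properness. Recall that, by construction, $\AsCone_{\mathscr{U}}\bigl(X,(b^{(n)})_n,(s^{(n)})_n\bigr)$ is the ultralimit with respect to $\mathscr{U}$ of the sequence of pointed metric spaces $\bigl(X,\tfrac{1}{s^{(n)}}d,b^{(n)}\bigr)_n$: a point of the cone is an equivalence class of sequences $(x_n)_n$ for which $\tfrac{1}{s^{(n)}}d(x_n,b^{(n)})$ is bounded, and the cone metric of $(x_n)_n,(x'_n)_n$ is $\lim_{\mathscr{U}}\tfrac{1}{s^{(n)}}d(x_n,x'_n)$.

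Next I would use the hypotheses to replace every term of this sequence by one fixed pointed space. Fix $b_0\in X$. Since $X$ is scale-invariant, for each $n$ there is an isometry $f_n\colon\bigl(X,\tfrac{1}{s^{(n)}}d\bigr)\to(X,d)$; since $\Isom(X,d)$ acts transitively, there is $g_n\in\Isom(X,d)$ with $g_n\bigl(f_n(b^{(n)})\bigr)=b_0$, so that $h_n:=g_n\circ f_n$ is an isometry $\bigl(X,\tfrac{1}{s^{(n)}}d,b^{(n)}\bigr)\to(X,d,b_0)$ of pointed metric spaces. One then checks the routine functoriality statement that the ultralimit construction carries an isometric sequence of pointed spaces to an isometric space: concretely, $(x_n)_n\mapsto\bigl(h_n(x_n)\bigr)_n$ induces an isometry from $\AsCone_{\mathscr{U}}\bigl(X,(b^{(n)})_n,(s^{(n)})_n\bigr)$ onto the ultralimit $\lim_{\mathscr{U}}(X,d,b_0)$ of the \emph{constant} sequence. (These two reductions are exactly what the cited Drutu--Kapovich theorems package: independence of the cone from the basepoint sequence for a space with cocompact isometry group, together with the identification of the cone of a proper space via ultralimits.)

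Finally I would identify the constant ultralimit $\lim_{\mathscr{U}}(X,d,b_0)$ with $X$, which is where properness enters. The diagonal map $X\to\lim_{\mathscr{U}}(X,d,b_0)$, $x\mapsto[(x)_n]$, is visibly an isometric embedding. For surjectivity, let $(x_n)_n$ be bounded with respect to $b_0$, say $d(x_n,b_0)\le R$ for all $n$; then every $x_n$ lies in the closed ball $\overline{B}_R(b_0)$, which is compact because $X$ is proper, and ultrafilter limits always exist in compact metric spaces, so $x_\infty:=\lim_{\mathscr{U}}x_n$ exists in $\overline{B}_R(b_0)$. Then $d\bigl([(x_n)_n],[(x_\infty)_n]\bigr)=\lim_{\mathscr{U}}d(x_n,x_\infty)=0$, so $[(x_n)_n]=[(x_\infty)_n]$ is in the image of the diagonal. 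Hence $\lim_{\mathscr{U}}(X,d,b_0)\cong X$, and combining with the previous paragraph, $\AsCone_{\mathscr{U}}\bigl(X,(b^{(n)})_n,(s^{(n)})_n\bigr)\cong X$. As $\mathscr{U}$, $(b^{(n)})_n$ and $(s^{(n)})_n$ were arbitrary, every asymptotic cone of $X$ is isometric to $X$.

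I do not expect a serious obstacle here: the argument is essentially bookkeeping. The only two points needing care are (i) the functoriality of the ultralimit under pointed isometries of the approximating sequence, justifying the replacement of $\bigl(X,\tfrac1{s^{(n)}}d,b^{(n)}\bigr)$ by $(X,d,b_0)$, and (ii) the elementary fact that $\mathscr{U}$-limits exist in compact metric spaces, which powers the surjectivity in the last step. The geodesicity hypothesis is not used in this argument; it guarantees separately that the cone is geodesic, and it is needed for the applications of this proposition elsewhere.
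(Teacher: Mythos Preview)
Your argument is correct. The paper does not actually prove this proposition; it simply records it as a consequence of Theorems~10.46 and~10.83 of Dru\c{t}u--Kapovich, so there is no paper proof to compare against beyond that citation. Your write-up is precisely the standard unpacking of those results: reduce to a constant pointed sequence via scale-invariance and transitivity (this is the content of their Theorem~10.46/10.83 in the homogeneous case), then identify the ultralimit of a constant proper pointed space with the space itself via compactness of closed balls. Your observation that geodesicity is not used in the argument is also correct; it is part of the standing hypotheses in the paper only because the surrounding propositions are stated for (roughly) geodesic spaces.
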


\begin{prop}
  \proplabel{roughisom} Let $f\colon X \to Y$ be rough isometry of
  rough geodesic metric spaces.  If $\hat X$ is an asymptotic cone of
  $X$ then $f$ induces an isometry $\hat f \colon \hat X \to \hat Y$
  to some asymptotic cone $\hat Y$ of $Y$.
\end{prop}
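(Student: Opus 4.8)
The plan is to construct $\hat Y$ from the \emph{same} ultrafilter and scaling sequence as $\hat X$, using $f$ to push the basepoint sequence forward, and then to check that the induced pushforward map is a bijective isometric embedding.

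Write $\hat X = \AsCone_{\mathscr{U}}\bigl(X,(b^{(n)})_n,(s^{(n)})_n\bigr)$ and fix $C$ with $f$ a $C$-rough isometry. I would set $\hat Y = \AsCone_{\mathscr{U}}\bigl(Y,(f(b^{(n)}))_n,(s^{(n)})_n\bigr)$ and define $\hat f$ on representatives by $\hat f\bigl((x_n)_n\bigr) = (f(x_n))_n$. The first point to check is that $\hat f$ maps the bounded-sequence set of $X$ into that of $Y$: from $d\bigl(f(x_n),f(b^{(n)})\bigr) \le d(x_n,b^{(n)}) + C$ and $s^{(n)} \to \infty$ we get $\frac{d(f(x_n),f(b^{(n)}))}{s^{(n)}} \le \frac{d(x_n,b^{(n)})}{s^{(n)}} + \frac{C}{s^{(n)}}$, whose right-hand side is a bounded sequence; and the same inequality with $x'_n$ in place of $b^{(n)}$ shows that $d\bigl((x_n)_n,(x'_n)_n\bigr) = 0$ in $\hat X$ forces $d\bigl((f(x_n))_n,(f(x'_n))_n\bigr) = 0$ in $\hat Y$, so $\hat f$ descends to a well-defined map $\hat X \to \hat Y$.

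Next I would show $\hat f$ is an isometric embedding. The two-sided estimate $d(x_n,x'_n) - C \le d\bigl(f(x_n),f(x'_n)\bigr) \le d(x_n,x'_n) + C$, divided through by $s^{(n)}$, produces two sequences whose termwise difference has absolute value at most $\frac{C}{s^{(n)}}$, which tends to $0$; taking $\lim_{\mathscr{U}}$ and using that an ordinary limit is also an ultralimit gives $\lim_{\mathscr{U}}\frac{d(f(x_n),f(x'_n))}{s^{(n)}} = \lim_{\mathscr{U}}\frac{d(x_n,x'_n)}{s^{(n)}}$, i.e.\ $\hat f$ preserves distances. For surjectivity I would use that $f$ is roughly onto: given a representative $(y_n)_n$ of a point of $\hat Y$, choose $x_n \in X$ with $d\bigl(f(x_n),y_n\bigr) \le C$ for every $n$; the lower rough-isometry bound then gives $d(x_n,b^{(n)}) \le d\bigl(f(x_n),f(b^{(n)})\bigr) + C \le d\bigl(y_n,f(b^{(n)})\bigr) + 2C$, so $(x_n)_n$ lies in the bounded-sequence set of $X$ and represents a point of $\hat X$, and $\lim_{\mathscr{U}}\frac{d(f(x_n),y_n)}{s^{(n)}} = 0$ shows $\hat f$ sends it to the class of $(y_n)_n$. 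A surjective isometric embedding is an isometry, which completes the argument.

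There is no serious obstacle: the entire argument rests on the single observation that the additive constant $C$ of a rough isometry becomes negligible after dividing by $s^{(n)} \to \infty$. The only point requiring care is the choice of basepoint and scaling data for $\hat Y$ — taking the pushed-forward basepoint sequence is what makes $\hat f$ both defined and onto rather than merely an isometric embedding into a larger cone. Rough geodesicity of $X$ and $Y$ is not actually used in this proof; it appears in the hypotheses only because it is the condition under which the cones are themselves geodesic.
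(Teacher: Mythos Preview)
Your argument is correct and is the standard direct construction. Note, however, that the paper does not actually prove this proposition: it is stated without proof as a consequence of Theorems~10.46 and~10.83 of Dru\c{t}u and Kapovich, so there is no in-paper argument to compare against. What you have written is essentially the proof one would find (or reconstruct) from that reference: push forward the basepoint sequence, keep the ultrafilter and scaling sequence, and observe that the additive constant $C$ vanishes after rescaling. Your remark that rough geodesicity plays no role in the proof is accurate; the hypothesis is there only because elsewhere in the paper one wants the cones themselves to be geodesic.
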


\begin{prop}
  \proplabel{fixbp} Let $X$ be a rough geodesic metric space with
  cobounded isometry group.  Then any asymptotic cone of $X$ is
  isometric to an asymptotic cone of $X$ with constant basepoint
  sequence.
\end{prop}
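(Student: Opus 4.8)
The plan is to transport the basepoints to a single fixed point by isometries of $X$ and then check that the resulting map between asymptotic cones is an isometry.

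Fix $x_0 \in X$. Since $\Isom(X)$ acts coboundedly there is a constant $D$ so that, for every $n$, we may choose $g_n \in \Isom(X)$ with $d\bigl(g_n(x_0),b^{(n)}\bigr) \le D$. Write $Y = \AsCone_{\mathscr{U}}\bigl(X,(b^{(n)})_n,(s^{(n)})_n\bigr)$ for the given asymptotic cone and $Y_0 = \AsCone_{\mathscr{U}}\bigl(X,(x_0)_n,(s^{(n)})_n\bigr)$ for the asymptotic cone with the same ultrafilter and scaling sequence but constant basepoint sequence $x_0$. I would show that
\[ \Phi \colon Y \to Y_0, \qquad \bigl[(x_n)_n\bigr] \mapsto \bigl[(g_n^{-1}(x_n))_n\bigr] \]
is a well-defined isometry.

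There are three things to verify. \emph{(i)} $\Phi$ is defined on bounded sequences: from $d\bigl(g_n^{-1}(x_n),x_0\bigr) = d\bigl(x_n,g_n(x_0)\bigr) \le d\bigl(x_n,b^{(n)}\bigr) + D$ and $s^{(n)} \to \infty$ we get that $\bigl(d(g_n^{-1}(x_n),x_0)/s^{(n)}\bigr)_n$ is bounded whenever $\bigl(d(x_n,b^{(n)})/s^{(n)}\bigr)_n$ is, so $(g_n^{-1}(x_n))_n$ represents a point of $Y_0$. \emph{(ii)} $\Phi$ preserves distances and hence descends to equivalence classes: since each $g_n$ is an isometry, $d\bigl(g_n^{-1}(x_n),g_n^{-1}(x'_n)\bigr) = d(x_n,x'_n)$ for all $n$, so the $\mathscr{U}$-limits defining the two distances agree. \emph{(iii)} $\Phi$ is onto: given $\bigl[(y_n)_n\bigr] \in Y_0$, the estimate $d\bigl(g_n(y_n),b^{(n)}\bigr) \le d(y_n,x_0) + D$ shows $(g_n(y_n))_n$ represents a point of $Y$, and $\Phi$ sends it to $\bigl[(y_n)_n\bigr]$.

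The argument is formal, so there is no serious obstacle; the only step needing attention is \emph{(i)} (and its mirror in \emph{(iii)}), where coboundedness supplies the uniform bound $D$ which is then absorbed because $s^{(n)} \to \infty$. Note that rough geodesicity of $X$ is not used in this argument; it is one of the standing hypotheses under which the cited properties of asymptotic cones hold.
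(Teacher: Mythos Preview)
Your argument is correct. The paper itself does not give a proof of this proposition; it merely records it as a consequence of Theorems~10.46 and~10.83 in Drutu--Kapovich. Your direct construction---transporting the basepoints to a fixed point via a cobounded family of isometries and checking that the coordinatewise map $\bigl[(x_n)_n\bigr]\mapsto\bigl[(g_n^{-1}(x_n))_n\bigr]$ is a well-defined bijective isometry between the two cones---is exactly the standard elementary proof, and your three verifications (i)--(iii) are all sound. You are also right that rough geodesicity plays no role here; only coboundedness of the isometry group and $s^{(n)}\to\infty$ are used.
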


\begin{thm}
  \thmlabel{chc} Let $Y$ be a $C$-coarsely injective space and let $X$
  be an asymptotic cone of $Y$.  Then $X$ is countably hyperconvex.
\end{thm}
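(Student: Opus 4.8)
The plan is to pass from the combinatorial Helly property of $\Gamma$ to a metric "integer hyperconvexity" of $\Gamma$, then show that this integer-scale property survives the ultralimit construction at the level of \emph{countable} families of balls.

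First I would recall that Helly graphs satisfy, besides the nonempty-total-intersection axiom, the crucial \emph{geodesicity of the distance function} in the following quantitative sense: for vertices $u,v$ and integers $r,s \ge 0$ with $d(u,v) \le r+s$, the balls $B_r(u)$ and $B_s(v)$ intersect. This is automatic since $\Gamma$ is a connected graph and one can walk along a geodesic from $u$ to $v$ and stop at the vertex at distance $r$ (or $d(u,v)$ if smaller) from $u$. Combining this with the Helly axiom, one gets: any finite (hence, by the finite-intersection form of the axiom, any) collection of metric balls in $\Gamma^0$ with radii in $\Z_{\ge 0}$ that pairwise intersect has nonempty total intersection. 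The point I want to extract is a \emph{uniform} statement amenable to ultralimits: if $\{B_{r_i}(c_i)\}_{i\in I}$ is a family of balls in $\Gamma^0$ such that $d(c_i,c_j) \le r_i + r_j$ for all $i,j$, then $\bigcap_i B_{r_i}(c_i) \neq \emptyset$; and moreover, for any finite subfamily, if $d(c_i,c_j) \le r_i + r_j + \epsilon$ for small $\epsilon$ then the balls of radius $r_i + \epsilon$ still have a common point, giving an approximate version with controlled error.

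Next, let $X = \AsCone_{\mathscr U}\bigl(\Gamma, (b^{(n)})_n, (s^{(n)})_n\bigr)$. Since $\Gamma$ is a graph it is $1$-roughly geodesic, so $X$ is a geodesic metric space (ultralimits of roughly geodesic spaces are geodesic); this handles the first clause of countable hyperconvexity. Now let $\{\bar B_{\rho_k}(\xi_k)\}_{k\in\N}$ be a countable family of closed balls in $X$ that pairwise intersect, i.e. $d(\xi_k,\xi_\ell) \le \rho_k + \rho_\ell$ for all $k,\ell$. Represent each $\xi_k$ by a sequence $(x^{(n)}_k)_n$ in $\Gamma^0$ and each radius $\rho_k$ as a genuine real. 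Fix $\epsilon > 0$. The heart of the argument is a diagonal/ultrafilter selection: I want to find, for $\mathscr U$-almost every $n$, a vertex $y^{(n)} \in \Gamma^0$ lying in $B_{\lceil (\rho_k+\epsilon)s^{(n)}\rceil}(x^{(n)}_k)$ \emph{simultaneously for all $k \le n$}. For each fixed $n$, the finite family $\{x^{(n)}_k : k \le n\}$ has the property that $d_\Gamma(x^{(n)}_k, x^{(n)}_\ell)/s^{(n)} \to d(\xi_k,\xi_\ell) \le \rho_k+\rho_\ell$ along $\mathscr U$, so for $\mathscr U$-almost every $n$ we have $d_\Gamma(x^{(n)}_k, x^{(n)}_\ell) \le (\rho_k + \rho_\ell + \epsilon)s^{(n)} \le \lceil(\rho_k+\epsilon)s^{(n)}\rceil + \lceil(\rho_\ell+\epsilon)s^{(n)}\rceil$ for all pairs $k,\ell \le n$ — here one uses that a finite intersection of $\mathscr U$-large sets is $\mathscr U$-large, and that $k,\ell$ range over a finite set depending on $n$. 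By the integer-radius Helly property of $\Gamma$ established above, the balls $B_{\lceil(\rho_k+\epsilon)s^{(n)}\rceil}(x^{(n)}_k)$, $k \le n$, pairwise intersect and hence have a common vertex $y^{(n)}$. For the $\mathscr U$-small set of remaining $n$, set $y^{(n)}$ arbitrarily (say $y^{(n)} = b^{(n)}$); the sequence $(y^{(n)})_n$ is bounded with respect to the basepoint and scaling sequences because its distance to $x^{(n)}_0$ is $\mathscr U$-eventually at most $(\rho_0+\epsilon)s^{(n)} + O(s^{(n)})$. Thus $(y^{(n)})_n$ defines a point $\eta_\epsilon \in X$, and for each fixed $k$, taking the ultralimit of $d_\Gamma(y^{(n)},x^{(n)}_k)/s^{(n)} \le (\rho_k+\epsilon) + 1/s^{(n)}$ over the $\mathscr U$-large set $\{n \ge k\}$ gives $d(\eta_\epsilon,\xi_k) \le \rho_k + \epsilon$. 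Finally I would let $\epsilon \to 0$ along a sequence $\epsilon_j \downarrow 0$, obtaining points $\eta_{\epsilon_j}$ with $d(\eta_{\epsilon_j},\xi_0) \le \rho_0 + \epsilon_j$ bounded, so by passing to a subsequence the $\eta_{\epsilon_j}$ converge (asymptotic cones of graphs are complete, indeed $X$ is complete as an ultralimit) to a point $\eta \in X$ with $d(\eta,\xi_k) \le \rho_k$ for every $k$; that is, $\eta \in \bigcap_k \bar B_{\rho_k}(\xi_k)$.

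The main obstacle I anticipate is the interchange of two infinite processes — the countable family of balls indexed by $k$ and the ultrafilter limit over $n$ — which is exactly why the hypothesis is \emph{countable} hyperconvexity and not full hyperconvexity. The device of only demanding that $y^{(n)}$ lie in the first $n$ balls at level $n$ (a diagonal truncation) is what makes each level a \emph{finite} Helly problem, so that the finite-intersection form of the Helly property and the finite-intersection stability of $\mathscr U$ both apply; one must check carefully that after the ultralimit every fixed ball $\bar B_{\rho_k}(\xi_k)$ is nonetheless captured, which works because $\{n : n \ge k\} \in \mathscr U$. The secondary technical point is the $\epsilon$-slack and the final limit $\epsilon \to 0$: the slack is needed both to absorb the $o(1)$ errors in $d_\Gamma(x^{(n)}_k,x^{(n)}_\ell)/s^{(n)} \to d(\xi_k,\xi_\ell)$ and to round radii up to integers, and removing it at the end requires the completeness of $X$, which I would cite or note is standard for asymptotic cones. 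Everything else — geodesicity of $X$, boundedness of the constructed sequences, the ultralimit distance computations — is routine.
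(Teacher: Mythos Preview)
Your overall strategy matches the paper's---represent each center by a sequence, apply the Helly property at each scale $n$ to a finite truncation of the family, and pass to the ultralimit---but there are two genuine gaps in the execution.

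First, the diagonal step as you state it fails. You claim that for $\mathscr U$-almost every $n$ the inequalities $d_\Gamma(x^{(n)}_k,x^{(n)}_\ell)\le(\rho_k+\rho_\ell+\epsilon)s^{(n)}$ hold for \emph{all} $k,\ell\le n$, justifying this by ``a finite intersection of $\mathscr U$-large sets is $\mathscr U$-large.'' But the number of constraints grows with $n$, so this is not a finite intersection in $\mathscr U$. Concretely, one may modify the representatives so that $x_k^{(k)}$ is far from $x_0^{(k)}$ for every $k$ (changing each $x_k^{(\cdot)}$ on a single index does not affect its ultralimit); then no $n>0$ satisfies the constraint for the pair $k=n$, $\ell=0$, and your ``good'' set is empty. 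The paper repairs exactly this point by forming the descending chain $S_1\supseteq S_2\supseteq\cdots$ in $\mathscr U$ and, for $n\in S_{j-1}\setminus S_j$, applying Helly only to the first $j$ centers with slack $\tfrac{1}{2j}$; what one then uses is that for each fixed $k$ the set of $n$ with cutoff at least $k$ contains $S_k\in\mathscr U$, which is all that is needed.

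Second, your final passage from $\eta_\epsilon$ to $\eta$ is incorrect: you write that ``by passing to a subsequence the $\eta_{\epsilon_j}$ converge'' since $X$ is complete, but completeness does not give subsequential limits of bounded sequences---that would require properness, and asymptotic cones are typically not proper. The paper avoids any second limit: by letting the slack $\tfrac{1}{2j}$ depend on the level of the chain rather than on an external parameter $\epsilon$, it constructs a \emph{single} sequence $(x^{(n)})_n$ whose ultralimit already lies in every $\bar B_{\rho_k}(\xi_k)$. Your two-stage approach can be rescued by iteratively adjoining the ball $\bar B_{\epsilon_j}(\eta_{\epsilon_j})$ to the family at step $j+1$ so as to force the $\eta_{\epsilon_j}$ to be Cauchy, but that is additional work not present in your write-up.
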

\begin{proof}
  Let $\mathscr{U} \subset \mathcal{P}(\N)$ be an ultrafilter on $\N$,
  let $(y^{(n)})_{n \in \N}$ be a basepoint sequence in $Y$, let
  $(d^{(n)})_{n \in \N}$ be a scaling sequence $d^{(n)} \to \infty$.
  Let $X$ be the asymptotic cone of $Y$ for the ultrafilter
  $\mathscr{U}$, the basepoint sequence $(y^{(n)})_n$ and the scaling
  sequence $(d^{(n)})_n$.
  
  Let $(x_i,r_i)_{i \in \N}$ in $X \times \R_{\ge 0}$ satisfy
  $d(x_i,x_{i'}) \le r_i + r_{i'}$.  We will prove that the
  intersection of the balls $\bigcap_{i \in \N} B_{r_i}(x_i)$ is
  nonempty.  For $i \in \N$ let $x_i$ be represented by
  $(x_i^{(n)})_n \in \prod_{n \in \N} Y$.  For $j,i,i' \in \N$,
  let
  \[ S_{j,i,i'} = \Bigl\{n \in \N \sth
    \frac{1}{d^{(n)}}d(x_i^{(n)},x_{i'}^{(n)}) \le r_i + r_{i'} +
    \frac{1}{j} \Bigr\} \] and note that $S_{j,i,i'} \in \mathscr{U}$.
  Then the finite intersection
  \[ S_j = \bigcap_{i,i' \le j} S_{j,i,i'} = \Bigl\{n \in \N \sth
    \text{$\frac{1}{d^{(n)}}d(x_i^{(n)},x_{i'}^{(n)}) \le r_i + r_{i'}
      + \frac{1}{j}$ for $i,i' \le j$} \Bigr\} \] is also an element
  of $\mathscr{U}$ so the $S_j$ form a descending chain
  \[ S_1 \supseteq S_2 \supseteq S_3 \supseteq \cdots \] in $\mathscr{U}$.

  We inductively define a sequence
  $(x^{(n)})_{n \in \N} \in \prod_{n \in \N} Y$.  For
  $n \in \N \setminus S_1$, set $x^{(n)} = y^{(n)}$.  Assume we have
  defined $x^{(n)}$ for $n \in \N \setminus S_j$.  We now extend this
  definition to $\N \setminus S_{j+1}$.  Take
  $n \in S_j \setminus S_{j+1}$.  Then
  \[ d(x_i^{(n)},x_{i'}^{(n)}) \le \Bigl(r_i +
    \frac{1}{2j}\Bigr)d^{(n)} + \Bigl(r_{i'} +
    \frac{1}{2j}\Bigr)d^{(n)} \] for all $i,i' \le j$.  Since $Y$ is
  $C$-coarsely injective, we may choose $x^{(n)} \in Y$ such that
  \[ d(x^{(n)},x_i^{(n)}) \le \Bigl(r_i +
    \frac{1}{2j}\Bigr)d^{(n)} + C \] and so
  \begin{equation}
    \eqnlabel{have}
    \frac{1}{d^{(n)}}d(x^{(n)},x_i^{(n)}) \le r_i + \frac{1}{2j} +
    \frac{C}{d^{(n)}}
  \end{equation} for all $i \le j$.

  It remains to define $x^{(n)}$ for $n \in \bigcap_j S_j$.  Since
  \[ \bigcap_j S_j = \Bigl\{n \in \N \sth
    \text{$\frac{1}{d^{(n)}}d(x_i^{(n)},x_{i'}^{(n)}) \le r_i +
      r_{i'}$ for all $i,i'$} \Bigr\} \] if $n \in \bigcap_j S_j$ then
  \[ d(x_i^{(n)},x_{i'}^{(n)}) \le r_id^{(n)} + r_{i'}d^{(n)} \] for
  all $i,i'$.  Since $Y$ is $C$-coarsely injective, we may choose
  $x^{(n)} \in Y$ such that
  \[ d(x^{(n)},x_i^{(n)}) \le r_id^{(n)} + C \] and so
  \begin{equation}
    \eqnlabel{havetwo}
    \frac{1}{d^{(n)}}d(x^{(n)},x_i^{(n)}) \le r_i + \frac{C}{d^{(n)}}
  \end{equation} for all $i$.

  Having defined $(x^{(n)})_n$ we now show that it represents an
  element $x \in X$ for which $x \in \bigcap_{i \in \N} B_{r_i}(x_i)$,
  thus completing the proof of the countable hyperconvexity of $X$.
  Take $i,j \in \N$.  To prove that $x \in B_{r_i}(x_i)$ it will
  suffice to find a set $S \in \mathscr{U}$ such that
  \begin{equation}
    \eqnlabel{need}
    \frac{1}{d^{(n)}}d(x^{(n)},x_i^{(n)}) \le r_i + \frac{1}{j}
  \end{equation}
  for all $n \in S$.  Since $d^{(n)} \to \infty$, for each $j \in \N$
  there is $N \in \N$ such that $d^{(n)} \ge 2jC$ for all $n \ge N$.
  We claim that \peqnref{need} holds for all
  $n \in S = S_j \setminus \{1, 2, \ldots, N\}$.  Indeed, any
  $n \in S$ is either an element of $S_j' \setminus S_{j'+1}$ for some
  $j' \ge j$ or is an element of $\bigcap_j S_j$.  In the case
  $n \in S_{j'} \setminus S_{j'+1}$, we have
  \[ \frac{1}{d^{(n)}}d(x^{(n)},x_i^{(n)}) \le r_i + \frac{1}{2j'} +
    \frac{C}{d^{(n)}} \le r_i + \frac{1}{2j} + \frac{1}{2j} = r_i +
    \frac{1}{j} \] by \peqnref{have}.  In the case
  $n \in \bigcap_j S_j$ we have
  \[ \frac{1}{d^{(n)}}d(x^{(n)},x_i^{(n)}) \le r_i + \frac{C}{d^{(n)}}
    \le r_i + \frac{1}{2j} \le r_i + \frac{1}{j} \] by
  \peqnref{havetwo}.  Thus in either case we have \peqnref{need}, as
  required.
\end{proof}

\begin{cor}
  \corlabel{sephc} Any separable asymptotic cone of a coarsely
  injective space is injective.
\end{cor}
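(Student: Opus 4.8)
The plan is to derive the corollary from \Thmref{chc} together with a soft point-set topology argument. The point is that countable hyperconvexity already handles \emph{countable} families of balls, while separability lets us reduce an arbitrary family to a countable subfamily via the Lindel\"of property.

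First I would observe that $X$ is geodesic, since this is part of being countably hyperconvex and $X$ is countably hyperconvex by \Thmref{chc}; so only the ball intersection property needs checking. Then, given a pairwise intersecting collection $\{B_{r_i}(x_i)\}_{i \in I}$ of closed balls in $X$, I would argue by contradiction: if the total intersection $\bigcap_{i \in I} B_{r_i}(x_i)$ were empty, then the sets $X \setminus B_{r_i}(x_i)$, which are open precisely because the balls are closed, would form an open cover of $X$. A separable metric space is second countable and hence Lindel\"of, so this cover admits a countable subcover indexed by some countable $J \subseteq I$; equivalently $\bigcap_{j \in J} B_{r_j}(x_j) = \emptyset$. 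But $\{B_{r_j}(x_j)\}_{j \in J}$ is a countable pairwise intersecting collection of closed balls, so countable hyperconvexity of $X$ forces its total intersection to be nonempty --- a contradiction. Hence $\bigcap_{i \in I} B_{r_i}(x_i) \neq \emptyset$ and $X$ is hyperconvex.

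The hard part here is essentially nonexistent: all the real work sits in \Thmref{chc}, and the passage from countable to unrestricted hyperconvexity is purely formal once one recalls that separable metric spaces are Lindel\"of. The only points requiring (minimal) care are that the balls in question be \emph{closed}, so that their complements are genuinely open, and that a subcollection of a pairwise intersecting collection is again pairwise intersecting --- both immediate.
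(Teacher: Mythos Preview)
Your proof is correct and is essentially the same as the paper's: both invoke \Thmref{chc} for countable hyperconvexity and then use the Lindel\"of property of separable metric spaces to pass from countable to arbitrary families of balls. The only cosmetic difference is that you phrase Lindel\"of via open covers and argue by contradiction, whereas the paper states the dual closed-set formulation directly; these are equivalent and the content is identical.
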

\begin{proof}
  Let $X$ be a separable asymptotic cone of a coarsely injective
  space.  A metric space is separable if and only if it is Lindel{\"
    o}f.  A space is \defterm{Lindel{\" o}f} if whenever a family
  $\mathcal{F}$ of closed sets has the property that every countable
  subfamily has nonempty intersection then $\mathcal{F}$ itself has
  nonempty intersection.

  By \Thmref{chc}, the asymptotic cone $X$ is countably hyperconvex.
  In particular, the asymptotic cone $X$ is complete and geodesic.
  Thus it suffices to show that any pairwise intersecting family of
  closed balls has nonempty total intersection.  If $\mathcal{F}$ is a
  family of pairwise intersecting closed balls in $X$ then every
  countable subfamily of $\mathcal{F}$ has nonempty intersection.
  Then, by the Lindel{\" o}f property, the total intersection of
  $\mathcal{F}$ is nonempty and we see that $X$ is injective.
\end{proof}

\section{Virtually nilpotent groups and injectivity}
\seclabel{nilpotent_injective}

In this section we use \Corref{sephc} along with results of Isbell on
injective metric spaces and results of Pansu on asymptotic cones of
nilpotent groups to prove that any virtually nilpotent coarsely
injective group is virtually abelian.  A group is \defterm{coarsely
  injective} if it acts metrically properly and coboundedly on a
coarsely injective space.  Any Helly group is coarsely injective.

Our main result in this section, \Thmref{nilpotent}, is also a
consequence of the semihyperbolicity of coarsely injective groups.
Lang showed that the injective hull of a coarsely injective space
has an isometry-invariant, bounded geodesic bicombing
\cite[Proposition~3.8]{Lang:2013} and, by Alonso and Bridson, this
implies that polycyclic subgroups of coarsely injective groups are
virtually abelian \cite[Theorem~4.1,
Theorem~7.1]{Alonso_Bridson:1995:semihyperbolic}.

Let $\mathfrak{g}$ be a
finite-dimensional Lie algebra.  The \defterm{lower central series}
\[ \mathfrak{g} = \mathfrak{g}_1 \supseteq \mathfrak{g}_2 \supseteq
  \mathfrak{g}_2 \supseteq \mathfrak{g}_3 \supseteq \cdots \] of
$\mathfrak{g}$ is defined inductively by
$\mathfrak{g}_i = [\mathfrak{g},\mathfrak{g}_{i-1}]$ for $i \ge 2$.
Recall that $\mathfrak{g}$ is \defterm{nilpotent} if and only if its
lower central series is eventually zero.  The \defterm{graded Lie
  algebra associated to} a finite-dimensional nilpotent Lie algebra
$\mathfrak{g}$ is the direct sum
$\mathfrak{g}_{\infty} = \bigoplus_{i=1}^{\infty} \mathfrak{g}_i /
\mathfrak{g}_{i+1}$ with the bracket defined by
$[x + \mathfrak{g}_{i+1}, y + \mathfrak{g}_{j+1}] = [x,y] +
\mathfrak{g}_{i+j+1} \in \mathfrak{g}_{i+j}/\mathfrak{g}_{i+j+1}$, for
$x \in \mathfrak{g}_i/\mathfrak{g}_{i+1}$ and
$y \in \mathfrak{g}_j/\mathfrak{g}_{j+1}$.  See Pansu
\cite{Pansu:1983}.

\begin{lem}
  \lemlabel{trivial_bracket} Let $\mathfrak{g}$ be a finite
  dimensional nilpotent Lie algerbra and let $\mathfrak{g}'$ be the
  graded Lie algebra associated to $\mathfrak{g}$.  If the Lie bracket
  on $\mathfrak{g}'$ is trivial then the Lie bracket on $\mathfrak{g}$
  is trivial.
\end{lem}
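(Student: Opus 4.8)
The plan is to argue by induction on the nilpotency length $c$ of $\mathfrak{g}$, i.e.\ the largest $i$ with $\mathfrak{g}_i \neq 0$. If $c = 1$ then $\mathfrak{g} = \mathfrak{g}_1$ and $\mathfrak{g}_2 = [\mathfrak{g},\mathfrak{g}] = 0$ already, so the bracket is trivial and there is nothing to prove. So assume $c \ge 2$ and that the statement holds for all nilpotent Lie algebras of shorter lower central series. The key observation is that the last nonzero term $\mathfrak{g}_c$ of the lower central series of $\mathfrak{g}$ appears, untouched, as the top graded piece of $\mathfrak{g}' = \mathfrak{g}_\infty$: since $\mathfrak{g}_{c+1} = 0$, the summand $\mathfrak{g}_c/\mathfrak{g}_{c+1}$ is just $\mathfrak{g}_c$.

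First I would use the hypothesis that the bracket on $\mathfrak{g}'$ is trivial to extract information about $\mathfrak{g}_c$. By definition of the graded bracket, for $x \in \mathfrak{g}_1/\mathfrak{g}_2$ and $y \in \mathfrak{g}_{c-1}/\mathfrak{g}_c$ we have $[x + \mathfrak{g}_2, y + \mathfrak{g}_c] = [x,y] + \mathfrak{g}_{c+1} = [x,y] \in \mathfrak{g}_c$. Since the bracket on $\mathfrak{g}'$ vanishes, this gives $[x,y] = 0$ for all $x \in \mathfrak{g}$ and all $y \in \mathfrak{g}_{c-1}$, where I use that the class of $y$ in $\mathfrak{g}_{c-1}/\mathfrak{g}_c$ and the class of $x$ in $\mathfrak{g}/\mathfrak{g}_2$ are arbitrary and that $[x,y]$ depends only on those classes modulo $\mathfrak{g}_c$ (which is all of the target). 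Hence $\mathfrak{g}_c = [\mathfrak{g},\mathfrak{g}_{c-1}] = 0$, contradicting $c \ge 2$ unless in fact $c$ was never $\ge 2$.

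More cleanly: the argument of the previous paragraph shows that if the bracket on $\mathfrak{g}'$ is trivial then $\mathfrak{g}_c = [\mathfrak{g},\mathfrak{g}_{c-1}] = 0$ whenever $c \ge 2$, which is absurd by the definition of $c$ as the top nonzero term. Therefore $c = 1$, i.e.\ $\mathfrak{g}_2 = 0$, i.e.\ the bracket on $\mathfrak{g}$ is trivial. This packages the whole thing without an explicit induction; the induction is really hidden in the single clean step that the triviality of the graded bracket forces the top term of the lower central series to collapse. I expect the only point requiring care is the bookkeeping that the graded bracket between the degree-$1$ and degree-$(c-1)$ pieces literally recovers $[x,y]\in\mathfrak{g}_c$ with no quotient being taken (because $\mathfrak{g}_{c+1}=0$), and that ranging over all classes of $x$ and $y$ really does range over all brackets $[x,y]$ generating $[\mathfrak{g},\mathfrak{g}_{c-1}]$; once that is spelled out, the conclusion is immediate. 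No deep input is needed — just the definitions of the lower central series and of the associated graded Lie algebra recalled just above the statement.
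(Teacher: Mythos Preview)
Your argument is correct, and it takes a slightly different route from the paper's. The paper pairs two degree-$1$ classes: from $[x+\mathfrak{g}_2,\,y+\mathfrak{g}_2]=[x,y]+\mathfrak{g}_3=0$ for all $x,y\in\mathfrak{g}$ it concludes $\mathfrak{g}_2=[\mathfrak{g},\mathfrak{g}]\subseteq\mathfrak{g}_3$, so the lower central series stabilizes at $\mathfrak{g}_2$, and nilpotence then forces $\mathfrak{g}_2=0$. You instead go to the top of the series: pairing degree $1$ with degree $c-1$ (where $c$ is the nilpotency class) and using that $\mathfrak{g}_{c+1}=0$ so the target quotient $\mathfrak{g}_c/\mathfrak{g}_{c+1}$ is literally $\mathfrak{g}_c$, you obtain $\mathfrak{g}_c=[\mathfrak{g},\mathfrak{g}_{c-1}]=0$ directly, contradicting the definition of $c$. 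Both arguments are one elementary step; yours avoids the ``stabilizes, hence zero by nilpotence'' manoeuvre at the price of naming $c$, while the paper's never leaves the bottom graded piece. Your opening mention of induction is unnecessary, as you yourself observe---the proof is a single contradiction, not an inductive descent.
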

\begin{proof}
  If the Lie bracket on $\mathfrak{g}'$ is trivial then, for any
  $x,y \in \mathfrak{g}_1$, the bracket
  $[x + \mathfrak{g}_2, y + \mathfrak{g_2}] = [x,y] + \mathfrak{g}_3$
  is trivial in $\mathfrak{g}_2/\mathfrak{g}_3$, i.e., we have
  $[x,y] \in \mathfrak{g}_3$.  But then
  $\mathfrak{g}_2 = [\mathfrak{g}_1, \mathfrak{g}_1] \subseteq
  \mathfrak{g}_3$ so $\mathfrak{g}_2 = \mathfrak{g}_3$.  This implies
  that the lower central series of $\mathfrak{g}$ stabilizes at
  $\mathfrak{g}_2$ so, by nilpotence, we have
  $0 = \mathfrak{g}_2 = [\mathfrak{g}, \mathfrak{g}]$.
\end{proof}

To prove the next lemma we will need the following result of Isbell.

\begin{thm}[{Isbell \cite[Theorem~3.6]{Isbell:1964}}]
  \thmlabel{isbell} Let $X$ be an $n$-dimensional locally compact
  injective metric space.  Then there exists a subspace
  $B \subset X$ such that $B$ is homeomorphic to a ball of dimension
  $n$ and $B$ isometrically embeds in $(\R^n,|\cdot|_{\infty}\bigr)$.
\end{thm}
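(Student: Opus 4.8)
Since the statement is quoted from Isbell, I would follow his argument; the first step is to reduce to a compact hyperconvex space. Because $\dim X = n$, the space $X$ has a compact subset of dimension $n$, and choosing a point $p$ of local dimension $n$ in it together with $\epsilon > 0$ small enough that $\bar B_{\epsilon}(p)$ is compact, the closed ball $Y := \bar B_{\epsilon}(p)$ satisfies $\dim Y = n$ (necessarily $\dim Y \leq \dim X = n$). Closed balls in a hyperconvex space are again hyperconvex: for a pairwise intersecting family of balls of $Y$, adjoin $\bar B_{\epsilon}(p)$ itself — which meets each member at its centre — and apply the Helly property of $X$ to obtain a common point lying in $Y$; and a geodesic between two points of $Y$ lying inside $Y$ is produced point by point by the same Helly argument, using completeness of $X$. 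Thus $Y$ is a compact, $n$-dimensional, hyperconvex metric space, and it suffices to find $B$ inside $Y$.

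Next I would bring in Isbell's tight span, i.e.\ the injective hull. For a finite metric space $Z$, the tight span $T(Z)$ is a compact polyhedral complex each of whose cells is isometric to an $\ell^{\infty}$-polytope, so a $d$-dimensional cell of $T(Z)$ contains a nondegenerate closed $\ell^{\infty}$-box, which is homeomorphic to a $d$-ball and isometric to a box in $(\R^{d},|\cdot|_{\infty})$. Since $Y$ is injective, the universal property of the injective hull gives an isometric embedding $T(Z)\hookrightarrow Y$ for every finite $Z\subseteq Y$. It therefore suffices to exhibit a finite $Z\subseteq Y$ with $\dim T(Z)\geq n$: then $\dim T(Z)\leq\dim Y=n$ forces an $n$-dimensional cell of $T(Z)$, hence a box $B$ that is homeomorphic to an $n$-ball and isometrically embeds in $(\R^{n},|\cdot|_{\infty})$, and the composite $B\hookrightarrow T(Z)\hookrightarrow Y\hookrightarrow X$ is an isometric embedding.

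The main obstacle is producing that finite $Z$, i.e.\ showing that the dimension of the compact injective space $Y$ is already detected on a finite subset: $\sup\{\dim T(Z):Z\subseteq Y\text{ finite}\}=\dim Y$. The substantive inequality $\geq$ is where Isbell's local structure theory is needed: a finite-dimensional injective metric space is, on a dense open subset, locally isometric to an $\ell^{\infty}$-polyhedron whose cells are realized as tight spans of finite subsets, so if every finite $Z$ had $\dim T(Z)<n$ then $Y$ would be a countable union of subsets of dimension $<n$, contradicting $\dim Y=n$ by the countable sum theorem. Everything else above — the reduction to $Y$, hyperconvexity of balls, the $\ell^{\infty}$-polyhedrality of finite tight spans, and the injective-hull embedding — is formal once this local description is in hand, so I expect essentially all of the work to be concentrated there.
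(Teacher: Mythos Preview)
The paper does not contain a proof of this statement: it is quoted as \cite[Theorem~3.6]{Isbell:1964} and used as a black box in the proof of \Lemref{nilpinf}, so there is no ``paper's own proof'' to compare against.

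Your outline is a faithful sketch of Isbell's argument. The reduction to a compact ball $Y$, the hyperconvexity of closed balls, the $\ell^{\infty}$-polyhedrality of finite tight spans, and the embedding $T(Z)\hookrightarrow Y$ via injectivity are all correct and standard. You correctly identify that the substantive step is showing $\sup\{\dim T(Z):Z\subseteq Y\ \text{finite}\}=\dim Y$, and you correctly locate where the work lies: Isbell's local structure theory for finite-dimensional injective spaces. However, as written this step is only gestured at --- the claim that ``a finite-dimensional injective metric space is, on a dense open subset, locally isometric to an $\ell^{\infty}$-polyhedron whose cells are realized as tight spans of finite subsets'' is essentially the content of Isbell's Theorem~3.6 itself (or rather the analysis leading to it), so invoking it here is circular unless you actually carry out that analysis. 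If you intend a self-contained proof you will need to supply that argument; if you intend only to indicate why the result is plausible and point to Isbell for the details, what you have written is fine as a summary.
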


\begin{lem}
  \lemlabel{nilpinf} Let $G$ be a virtually nilpotent group acting
  metrically properly and coboundedly on a coarsely injective space
  $Y$.  Then any asymptotic cone of $Y$ is isometric to
  $\bigl(\R^n,|\cdot|_{\infty}\bigr)$.
\end{lem}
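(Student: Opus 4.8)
The strategy is to combine Pansu's theorem on asymptotic cones of virtually nilpotent groups with the rigidity furnished by \Corref{sephc} and \Thmref{isbell}. First I would fix a word metric on $G$ coming from the proper cocompact action on $\Gamma$ via \Propref{pbmetric}, so that by \Propref{roughisom} every asymptotic cone of $\Gamma$ is isometric to an asymptotic cone of $(G,d_G)$, and by \Propref{fixbp} I may take the basepoint sequence to be constant. By Pansu's theorem, the asymptotic cone of a virtually nilpotent group with a word metric is independent of the ultrafilter and scaling sequence, is a proper geodesic metric space, and is isometric to a graded nilpotent Lie group $N_\infty = \exp(\mathfrak{g}_\infty)$ equipped with a Carnot--Carathéodory metric; in particular it is scale-invariant with transitive isometry group. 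Since $N_\infty$ is a finite-dimensional manifold it is separable, so by \Corref{sephc} the cone $X \isomor N_\infty$ is hyperconvex; it is also locally compact, being a proper metric space, and finite-dimensional, say of topological dimension $n$.

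Now I would apply \Thmref{isbell}: there is a subspace $B \subset X$ homeomorphic to an $n$-ball that embeds isometrically in $(\R^n,|\cdot|_\infty)$. The key point is a dimension/volume comparison. On one hand, $B$ is an $n$-dimensional ball embedded in $(\R^n, |\cdot|_\infty)$, so it has nonempty interior there and hence positive (finite) Lebesgue/Hausdorff $n$-measure; on the other hand, the Hausdorff dimension of the Carnot group $N_\infty$ equals its homogeneous dimension $Q = \sum_i i \cdot \dim(\mathfrak{g}_i/\mathfrak{g}_{i+1})$, which is strictly greater than its topological dimension $n = \dim \mathfrak{g}_\infty$ whenever the grading is nontrivial, i.e.\ whenever $\mathfrak{g}_\infty$ is non-abelian. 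A subset of $N_\infty$ that is bi-Lipschitz (here isometric) to a subset of Euclidean $\R^n$ with positive $n$-measure would force the Hausdorff dimension of that subset to be exactly $n$, which is incompatible with $B$ having nonempty interior in the $Q$-Ahlfors-regular space $N_\infty$ when $Q > n$. Hence the grading must be trivial, so $\mathfrak{g}_\infty$ is abelian, so by \Lemref{trivial_bracket} the Lie algebra $\mathfrak{g}$ is abelian and $N_\infty \isomor \R^n$ as a Lie group. Its Carnot metric is then a left-invariant (hence translation-invariant) geodesic metric on $\R^n$ induced by the CC-construction, which for an abelian Lie algebra is simply the norm metric of a norm $|\cdot|$ on $\R^n$; thus $X \isomor (\R^n, |\cdot|)$.

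It remains to identify the norm as $|\cdot|_\infty$ up to linear change of coordinates, or rather to argue directly that $X$ is isometric to $(\R^n,|\cdot|_\infty)$. Here I would invoke \Thmref{isbell} once more together with hyperconvexity of $X = (\R^n,|\cdot|)$: the isometric copy $B$ of an $n$-ball of $(\R^n,|\cdot|_\infty)$ sitting inside $(\R^n,|\cdot|)$ has nonempty interior, so on an open set the norm $|\cdot|$ agrees with the pullback of $|\cdot|_\infty$ under some affine map; since both are norms (hence determined by their unit balls, which are homogeneous), this forces the unit ball of $|\cdot|$ to be a linear image of a cube, i.e.\ there is $\phi \in \GL_n(\R)$ with $|\phi(r)| = |r|_\infty$ for all $r$, so $X = (\R^n,|\cdot|) \isomor (\R^n, |\cdot|_\infty)$. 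The main obstacle I anticipate is making the dimension-comparison step airtight---precisely, deducing from "$N_\infty$ contains an isometric copy of a Euclidean $n$-ball with interior" that the homogeneous dimension equals the topological dimension; this should follow from Ahlfors $Q$-regularity of Carnot groups together with the fact that isometric embeddings preserve Hausdorff dimension, but the bookkeeping relating the hyperconvexity-given ball $B$ (which a priori need only be topologically a ball) to a set of positive measure needs care, and may be where the argument leans hardest on \Thmref{isbell} giving $B$ with nonempty interior.
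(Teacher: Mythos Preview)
Your approach is broadly sound but takes a genuinely different route from the paper.  You first establish that $G_\infty$ is abelian via a Hausdorff-dimension comparison (Isbell's ball $B$ has nonempty interior both in $N_\infty$ and in its isometric image in $(\R^n,|\cdot|_\infty)$ by Invariance of Domain, and Ahlfors $Q$-regularity of Carnot groups then forces $Q=n$), and only afterwards identify the norm.  The paper never proves abelianness inside this lemma.  Instead it uses \Thmref{isbell} and Invariance of Domain to obtain one small metric ball $B_\epsilon(x)\subset G_\infty$ isometric to an $\ell^\infty$-ball, then applies the dilations $\delta_t$ and left translations (available in Pansu's cone without knowing it is abelian) to propagate this to isometries $B_r(x)\to B_r(0)$ for every $r>0$ and every centre $x$, and finally patches these into a global isometry using Mankiewicz's extension of Mazur--Ulam.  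Abelianness of $G_\infty$ is then derived in \Thmref{nilpotent} as a \emph{consequence} of the isometry with $(\R^n,|\cdot|_\infty)$, rather than as an input.  Your route imports extra machinery (Ahlfors regularity, homogeneous versus topological dimension) that the paper avoids, and your appeal to \Lemref{trivial_bracket} is unnecessary here: once $Q=n$ you already have $\mathfrak{g}_\infty$ abelian, hence $N_\infty\cong\R^n$ as a Lie group, which is all that is needed.

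Your final norm-identification step has a gap that the paper's use of Mankiewicz is exactly designed to fill.  You assert that ``on an open set the norm $|\cdot|$ agrees with the pullback of $|\cdot|_\infty$ under some affine map,'' but the isometric embedding $B\hookrightarrow(\R^n,|\cdot|_\infty)$ supplied by \Thmref{isbell} is only a metric isometry; nothing yet says it is affine.  The passage from a metric isometry between open (or convex with interior) subsets of normed spaces to an affine one is precisely Mankiewicz's theorem.  Once you invoke it on a small ball inside $B$, you obtain an affine isometry $(\R^n,|\cdot|)\to(\R^n,|\cdot|_\infty)$ directly, and the hand-wavy appeal to ``norms are determined by their unit balls'' becomes unnecessary.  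Without Mankiewicz (or an equivalent rigidity statement) the step from local isometry to $\phi\in\GL_n(\R)$ with $|\phi(\cdot)|=|\cdot|_\infty$ is unjustified.
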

\begin{proof}
  Since $G$ acts metrically properly and coboundedly on a roughly
  geodesic metric space, it is finitely generated.  By
  \Propref{pbmetric}
  \[ d_G(g_1,g_2) = d_{Y}(g_1\cdot y_0,g_2\cdot y_0) +
    \delta(g_1, g_2) \] defines a roughly geodesic, left-invariant
  metric on $G$ and $g \mapsto g\cdot y_0$ is a rough isometry from
  $(G,d)$ to $Y$.  Since $G$ is virtually nilpotent, by Pansu
  \cite{Pansu:1983}, there is a unique asymptotic cone of $(G,d)$,
  independent of the choice of ultrafilter, basepoint sequence and
  scaling sequence.  Moreover, this asymptotic cone
  $(G_{\infty}, d_{\infty})$ is a simply connected nilpotent Lie group
  $G_{\infty}$, the metric $d_{\infty}$ is left-invariant and induces
  the manifold topology on $G_{\infty}$ and there exists a
  one-parameter family of Lie group automorphisms $\delta_t$ that are
  dilations of $d_{\infty}$, that is
  \[d_{\infty}\bigl(\delta_t(x),\delta_t(y)\bigr) = td_{\infty}(x,y)\]
  for any $t > 0$ and any $x,y \in G_{\infty}$.  By
  \Propref{roughisom}, it suffices to show that
  $(G_{\infty}, d_{\infty})$ is isometric to
  $\bigl(\R^n,|\cdot|_{\infty}\bigr)$.

  Since $G_{\infty}$ is simply connected and nilpotent, it is
  homeomorphic to $\R^n$, for some $n$
  \cite[Theorem~1.104]{knapp:lie_groups}.  Hence, by \Corref{sephc},
  the asymptotic cone $(G_{\infty},d_{\infty})$ is injective.  Then,
  by \Thmref{isbell} of Isbell and Brouwer's Invariance of Domain
  Theorem, the asymptotic cone $(G_{\infty},d_{\infty})$ contains an
  open set $U$ that is isometric to an open subspace of
  $\bigl(\R^n,|\cdot|_{\infty}\bigr)$.  Since $d_{\infty}$ induces the
  manifold topology, for any $x \in U$ there is some $\epsilon > 0$
  such that the ball $B_{\epsilon}(x)$ of radius $\epsilon$ centered
  at $x$ is contained in $U$ and so is isometric to a ball of radius
  $\epsilon$ in $\bigl(\R^n,|\cdot|_{\infty}\bigr)$.  Applying
  dilations and translations in $G_{\infty}$ and
  $\bigl(\R^n,|\cdot|_{\infty}\bigr)$ we see that, for any ball
  $B_r(x)$ of $G_{\infty}$ of any radius $r$ and any center $x$, there
  is an isometry from $B_r(x)$ to the ball $B_r(0)$ of radius $r$ of
  $\bigl(\R^n,|\cdot|_{\infty}\bigr)$ centered at the origin
  $0 \in \R^n$.  These balls are cubes with the $l_{\infty}$ metric.

  For any integer $k \ge 1$ let $B_k(1)$ be the ball of radius $k$
  centered at the identity $1 \in G_{\infty}$ and let $B_k(0)$ be the
  ball of radius $k$ centered at the origin $0$ in
  $\bigl(\R^n,|\cdot|_{\infty}\bigr)$.  Let
  $f_1 \colon B_1(1) \to B_1(0)$ be any isometry.  By Mankiewicz's
  generalization of the Mazur-Ulam Theorem \cite{Mankiewicz:1972}, for
  each $k > 1$ there is a unique isometry
  $f_k \colon B_k(1) \to B_k(0)$ extending $f_1$.  Then
  $f = \cup_{k \ge 1} f_k$ is an isometry from
  $\cup_{k \ge 1} B_k(1) = G_{\infty}$ to
  $\bigl(\R^n,|\cdot|_{\infty}\bigr)$.
\end{proof}

\begin{thm}
  \thmlabel{nilpotent} Let $G$ be a virtually nilpotent coarsely
  injective group.  Then $G$ is virtually abelian.
\end{thm}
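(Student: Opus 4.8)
The plan is to combine \Lemref{nilpinf} with Pansu's structural description of asymptotic cones of virtually nilpotent groups and then feed the conclusion into \Lemref{trivial_bracket}. Since $G$ is Helly it acts properly and cocompactly on a Helly graph $\Gamma$, and it is in particular finitely generated. Let $d = d_G$ be a metric on $G$ as in \Propref{pbmetric}. By \Propref{pbmetric} the orbit map $(G,d)\to\Gamma$ is a rough isometry, so by \Propref{roughisom} every asymptotic cone of $(G,d)$ is isometric to an asymptotic cone of $\Gamma$, hence by \Lemref{nilpinf} to $\bigl(\R^n,|\cdot|_{\infty}\bigr)$ for some fixed $n$. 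On the other hand, by Pansu \cite{Pansu:1983} (as used in the proof of \Lemref{nilpinf}) such an asymptotic cone is a simply connected nilpotent Lie group $G_{\infty}$ with a left-invariant metric $d_{\infty}$, and the Lie algebra $\mathfrak{g}_{\infty}$ of $G_{\infty}$ is the graded Lie algebra associated to the Lie algebra $\mathfrak{g}$ of the real Malcev completion of a torsion-free finite-index nilpotent subgroup $H$ of $G$. It therefore suffices to prove that $\mathfrak{g}_{\infty}$ is abelian: then \Lemref{trivial_bracket} makes $\mathfrak{g}$ abelian, so the Malcev completion of $H$ is $\R^{\dim\mathfrak{g}}$, whence $H \isomor \Z^{\dim\mathfrak{g}}$ and $G$ is virtually abelian.

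Since the Lie algebra of an abelian Lie group is abelian, it is enough to show that the connected Lie group $G_{\infty}$ is abelian, and I would do this as follows. Fix an isometry $\Phi\colon(G_{\infty},d_{\infty})\to\bigl(\R^n,|\cdot|_{\infty}\bigr)$. As $d_{\infty}$ is left-invariant, each left translation $L_g$ of $G_{\infty}$ is an isometry, so $g\mapsto\Phi L_g\Phi^{-1}$ is an injective homomorphism $\rho\colon G_{\infty}\to\Isom\bigl(\R^n,|\cdot|_{\infty}\bigr)$ (injective since $L_g(1)=g$), and it is continuous for the topology of pointwise convergence because multiplication on $G_{\infty}$ is continuous and $\Phi$ is a homeomorphism. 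By the Mazur--Ulam theorem every isometry of a normed space is affine, so $\Isom\bigl(\R^n,|\cdot|_{\infty}\bigr)=\R^n\rtimes\IsomAut\bigl(\R^n,|\cdot|_{\infty}\bigr)$, and $\IsomAut\bigl(\R^n,|\cdot|_{\infty}\bigr)$ is finite — it is the group of signed permutation matrices. Composing $\rho$ with the projection onto $\IsomAut\bigl(\R^n,|\cdot|_{\infty}\bigr)$ gives a continuous homomorphism from the connected group $G_{\infty}$ to a finite group; this is trivial, so $\rho$ takes values in the translation subgroup $\R^n$. Hence $G_{\infty}$ embeds into $\R^n$ and is abelian, as needed.

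I expect the metric-geometric core — that a connected Lie group acting isometrically and simply transitively on $\bigl(\R^n,|\cdot|_{\infty}\bigr)$ must be $\R^n$ — to be the easy part, and the main care to go into the bookkeeping around Pansu's theorem: checking that the Lie algebra of the Pansu limit is exactly the associated graded Lie algebra $\mathfrak{g}'$ of \Lemref{trivial_bracket} with $\mathfrak{g}$ the Malcev Lie algebra that controls virtual abelianness of $G$, and that this is unaffected by passing to the finite-index torsion-free nilpotent subgroup and by working with $d_G$ rather than a word metric. An alternative to the isometry-group step is a Hausdorff-dimension comparison: $\bigl(\R^n,|\cdot|_{\infty}\bigr)$ has Hausdorff dimension $n$, equal to its topological dimension, whereas a Carnot group of topological dimension $n$ has Hausdorff dimension $\sum_{i\ge1}i\dim\bigl(\mathfrak{g}_i/\mathfrak{g}_{i+1}\bigr)\ge n$, with equality only when $\mathfrak{g}_i=\mathfrak{g}_{i+1}$ for all $i\ge2$, i.e.\ only when $\mathfrak{g}$ is abelian.
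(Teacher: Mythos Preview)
Your proposal is correct and follows essentially the same route as the paper: Pansu's theorem identifies the asymptotic cone as a Carnot group whose Lie algebra is the associated graded of the Malcev Lie algebra, \Lemref{nilpinf} identifies it with $(\R^n,|\cdot|_{\infty})$, and the Mazur--Ulam plus connectedness argument forces $G_{\infty}$ into the translation subgroup, after which \Lemref{trivial_bracket} finishes. The only cosmetic difference is that the paper passes to the torsion-free quotient $N/\tor(N)$ rather than to a torsion-free finite-index subgroup, but for finitely generated nilpotent groups either works; your Hausdorff-dimension alternative is a nice aside not present in the paper.
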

\begin{proof}
  Since $G$ acts metrically properly and coboundedly on a roughly
  geodesic metric space, it is finitely generated.  Let $N < G$ be a
  finite index nilpotent subgroup of $G$.  Then $N$ is finitely
  generated.  The finite order elements of $N$ form a finite normal
  subgroup $\tor(N)$ \cite[Theorem~2.25 and
  Theorem~2.26]{clement:nilpotent}.  Let $N' = N/\tor(N)$ be the
  quotient group.  Let $M$ be the real Mal'cev completion of $N'$
  \cite{malcev:completion}.  Then $M$ is a simply connected nilpotent
  Lie group and $N'$ is a uniform lattice in $M$.  Let $\mathfrak{m}$
  be the Lie algebra of $M$.
  
  Let $Y$ be a coarsely injective space with a proper cocompact
  action of $G$.  By \Propref{pbmetric}
  \[ d_G(g_1,g_2) = d_{Y}(g_1\cdot y_0,g_2\cdot y_0) +
    \delta(g_1, g_2) \] defines a roughly geodesic, left-invariant
  metric on $G$ and $g \mapsto g\cdot y_0$ is a rough isometry from
  $(G,d)$ to $Y$.  By Pansu \cite{Pansu:1983}, there is a unique
  asymptotic cone of $(G,d)$, independent of the choice of
  ultrafilter, basepoint sequence and scaling sequence.  Moreover,
  this asymptotic cone $(G_{\infty}, d_{\infty})$ is a simply
  connected nilpotent Lie group $G_{\infty}$ and the metric
  $d_{\infty}$ is left-invariant.  Moreover, by Pansu
  \cite{Pansu:1983}, the Lie algebra $\mathfrak{g}_{\infty}$ of
  $G_{\infty}$ is the graded Lie algebra associated to $\mathfrak{m}$.
  
  Applying \Lemref{nilpinf} and \Propref{roughisom} we have that
  $(G_{\infty},d_{\infty})$ is isometric to
  $\bigl(\R^n,|\cdot|_{\infty}\bigr)$.  Then $G_{\infty}$ acts freely,
  transitively and continuously by isometries on
  $\bigl(\R^n,|\cdot|_{\infty}\bigr)$ and so we may view $G_{\infty}$
  as a connected subgroup of $\Isom\bigl(\R^n,|\cdot|_{\infty}\bigr)$.
  By the Mazur-Ulam Theorem the elements of
  $\Isom\bigl(\R^n,|\cdot|_{\infty}\bigr)$ are affine maps and so we
  have a map
  $\pi\colon \Isom\bigl(\R^n,|\cdot|_{\infty}\bigr) \to \GL_n(\R)$
  sending each affine isometry to its linear part.  The image of $\pi$
  is the group $\IsomAut\bigl(\R^n,|\cdot|_{\infty}\bigr)$ of linear
  isometries of $\bigl(\R^n,|\cdot|_{\infty}\bigr)$, which is finite
  and so totally disconnected.  Then since $G_{\infty}$ is connected,
  it is contained in the kernel of $\pi$.  That is, the action of
  $G_{\infty}$ on $\bigl(\R^n,|\cdot|_{\infty}\bigr)$ is by
  translations.  This action is faithful and so $G_{\infty}$ is
  abelian.  Hence the Lie bracket on $\mathfrak{g}_{\infty}$ is
  trivial and so, by \Lemref{trivial_bracket}, the Lie bracket on
  $\mathfrak{m}$ is trivial.  Then $M$ is abelian so that
  $N' = N/\tor(N)$ is abelian.  Hence $N$ is virtually abelian and so
  $G$ is virtually abelian.
\end{proof}

\section{Virtually abelian Helly groups}
\seclabel{abelian_helly}

In this section we fully characterize the virtually abelian groups
that are Helly.  In our main theorem stated below, the equivalence
between conditions \pitmref{ptgroup}, \pitmref{affisom},
\pitmref{tiling} and \pitmref{cocub} was proven by Hagen
\cite{Hagen:2014}.  The implication
$\pitmref{cocub} \Rightarrow \pitmref{ghelly}$ follows from results of
Bandelt \cite[Proposition~2.6]{Bandelt:1991} and of Polat and Pouzet
\cite[Proposition~3.1.2]{Polat:2001} via the connection between median
graphs and $\CAT(0)$ cube complexes \cite{Roller:1998, Gerasimov:1998,
  Chepoi:2000}.  The implication
$\pitmref{ginjective} \Rightarrow \pitmref{ptgroup}$ is our original
contribution.

\begin{thm}
  \thmlabel{main} Let $G$ be a finitely generated virtually abelian group.
  Let $P' < \GL_n(\R)$ be the point group of $G$.  The following
  conditions are equivalent.
  \begin{enumerate}
  \item \itmlabel{ghelly} $G$ is Helly.
  \item \itmlabel{ginjective} $G$ is coarsely injective.
  \item \itmlabel{ptgroup} There is a $\phi \in \GL_n(\R)$ such that
    $\phi P' \phi^{-1} < \IsomAut\bigl(\R^n,|\cdot|_{\infty}\bigr)$
    where $\IsomAut\bigl(\R^n,|\cdot|_{\infty}\bigr) < \GL_n(\R)$ is
    the group of isometric automorphisms of $\R^n$ with the supremum
    norm $|\cdot|_{\infty}$.
  \item \itmlabel{affisom} $G$ acts properly and cocompactly by affine
    isometries on $\bigl(\R^n,|\cdot|_{\infty}\bigr)$.
  \item \itmlabel{tiling} $G$ acts properly and cocompactly by cellular
    automorphisms on the standard cubical tiling of $\R^n$.
  \item \itmlabel{cocub} $G$ is cocompactly cubulated.
  \end{enumerate}
\end{thm}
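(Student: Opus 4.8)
The plan is to establish the cycle $\pitmref{ghelly} \Rightarrow \pitmref{ptgroup} \Rightarrow \cdots \Rightarrow \pitmref{cocub} \Rightarrow \pitmref{ghelly}$. The implications relating $\pitmref{ptgroup}$, $\pitmref{affisom}$, $\pitmref{tiling}$, $\pitmref{cocub}$, and the implication $\pitmref{cocub} \Rightarrow \pitmref{ghelly}$, are exactly the statements attributed above to Hagen and to Bandelt, Polat and Pouzet, so the only thing left to prove is $\pitmref{ghelly} \Rightarrow \pitmref{ptgroup}$.

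Assume $G$ is Helly and fix a proper cocompact action of $G$ on a Helly graph $\Gamma$ with basepoint $v_0 \in \Gamma^0$. Following \Secref{virtually_zn}, choose a finite-index normal subgroup $A \isomor \Z^n$ so that $P'$ is the image in $\GL_n(\R)$ of the conjugation homomorphism $G \to \GL_n(\Z)$, $g \mapsto c_g$ with $c_g(a) = gag^{-1}$. First I would transport everything onto $\Z^n$. Since $A$ has finite index in $G$, the $A$-action on $\Gamma$ is still proper and cocompact, so \Propref{pbmetric} yields a proper, roughly geodesic, left-invariant metric $d$ on $\Z^n$, namely $d(a_1,a_2) = d_\Gamma(a_1 v_0, a_2 v_0) + \delta(a_1,a_2)$, together with a rough isometry $(\Z^n,d) \to \Gamma$, $a \mapsto a\cdot v_0$. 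Then \Propref{mtonorm} provides a norm $|\cdot|_d$ on $\R^n$ making $(\Z^n,d) \hookrightarrow (\R^n,|\cdot|_d)$ a rough isometry, with the crucial feature that any $f \in \GL_n(\Z)$ which is a rough isometry of $(\Z^n,d)$ lies in $\IsomAut(\R^n,|\cdot|_d)$.

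Next I would feed the point group into this machine. Each $c_g$ lies in $\GL_n(\Z)$ by normality of $A$, and $\{c_g \sth g \in G\}$ is precisely $P' < \GL_n(\R)$. A short computation using left-invariance of $d$ gives $d(c_g a_1, c_g a_2) = d_\Gamma(a_1 g^{-1}v_0, a_2 g^{-1}v_0) + \delta(a_1,a_2)$, which differs from $d(a_1,a_2)$ by at most $2 d_\Gamma(v_0, g^{-1}v_0)$, a constant depending only on $g$ (compare points through the basepoint change, exactly as in \Propref{pbmetric}). Since $c_g$ is a bijection of $\Z^n$ it is therefore a rough isometry of $(\Z^n,d)$, so $c_g \in \IsomAut(\R^n,|\cdot|_d)$ by \Propref{mtonorm}. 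Hence $P' < \IsomAut(\R^n,|\cdot|_d)$.

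It remains to identify $(\R^n,|\cdot|_d)$ with $(\R^n,|\cdot|_\infty)$ by a linear isomorphism, and this is the only place the Helly hypothesis does genuine work. Being a finite-dimensional normed space, $(\R^n,|\cdot|_d)$ is scale-invariant, proper, geodesic and has transitive isometry group, so by \Propref{scaleinv} it is isometric to one of its own asymptotic cones; combining the rough isometries of \Propref{mtonorm} and \Propref{pbmetric} exhibits $(\R^n,|\cdot|_d)$ as rough-isometric to $\Gamma$, so by \Propref{roughisom} it is isometric to an asymptotic cone of $\Gamma$. Since $G$ is virtually abelian, hence virtually nilpotent, \Lemref{nilpinf} identifies every asymptotic cone of $\Gamma$ with $(\R^n,|\cdot|_\infty)$ (the dimensions necessarily agreeing), so there is an isometry $(\R^n,|\cdot|_d) \to (\R^n,|\cdot|_\infty)$; by the Mazur--Ulam Theorem it is affine, and precomposing with a translation turns it into a linear map $\phi \in \GL_n(\R)$ with $|\phi r|_\infty = |r|_d$ for all $r \in \R^n$. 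Conjugating $P' < \IsomAut(\R^n,|\cdot|_d)$ by $\phi$ then gives $\phi P' \phi^{-1} < \IsomAut(\R^n,|\cdot|_\infty)$, which is $\pitmref{ptgroup}$. The main obstacle is precisely this last step: by itself, the fact that $P'$ preserves \emph{some} norm is automatic for any finite subgroup of $\GL_n(\R)$ and does not, for instance, exclude an order-six rotation; the real content is that $\pitmref{ghelly}$ forces the invariant norm to be $|\cdot|_\infty$, which is extracted from \Lemref{nilpinf} and ultimately from the countable hyperconvexity of asymptotic cones of Helly graphs (\Corref{sephc}).
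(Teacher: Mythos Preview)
Your proposal is correct and follows essentially the same route as the paper: pull back a metric from $\Gamma$, apply \Propref{mtonorm} to get a norm $|\cdot|_d$ preserved by $P'$, identify $(\R^n,|\cdot|_d)$ with an asymptotic cone of $\Gamma$ via \Propref{scaleinv} and \Propref{roughisom}, invoke \Lemref{nilpinf} to force $|\cdot|_d \cong |\cdot|_\infty$, and linearize via Mazur--Ulam. The only organizational difference is that the paper defines the metric on all of $G$ first and then restricts to $\Z^n$, deriving the rough-isometry property of conjugation from left-invariance of $d_G$ on $G$, whereas you work directly on $\Z^n$ and phrase the same estimate as a basepoint change; these are equivalent.
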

\begin{proof}
  By the discussion in \Secref{crystals}, we have a morphism
  \[ \begin{tikzcd}
      1 \ar[r] & \Z^n \ar[r] \ar[d,hook] & G \ar[r] \ar[d] & P \ar[r] \ar[d, "1_P"] & 1 \\
      1 \ar[r] & \R^n \ar[r] & \R^n \rtimes P \ar[r] & P \ar[r] & 1
    \end{tikzcd} \] of short exact sequences where $P$ is finite, the
  map $\Z^n \hookrightarrow \R^n$ is the inclusion and the bottom row
  is the usual short exact sequence of the semidirect product.  Then
  the $P$-action $\alpha \colon P \to \GL_n(\R)$ on $\R^n$ is the
  composition of the $P$ action $P \to \GL_n(\Z)$ on $\Z^n$ with the
  inclusion $\GL_n(\Z) \hookrightarrow \GL_n(\R)$ and the point group
  $P'$ is the image of $\alpha$.

  We now prove the implication
  $\pitmref{ginjective} \Rightarrow \pitmref{ptgroup}$.  Let $Y$ be
  a coarsely injective space on which $G$ acts properly and
  cocompactly.  Choose a point $y_0 \in Y$.  By \Propref{pbmetric}
  \[ d_G(g_1,g_2) = d_{Y}(g_1\cdot y_0,g_2\cdot y_0) +
    \delta(g_1, g_2) \] defines a roughly geodesic, left-invariant
  metric on $G$ and $g \mapsto g\cdot y_0$ is a rough isometry from
  $(G,d)$ to $Y$.  For any $h \in G$, we have
  \begin{align*}
    d(hg_1h^{-1},hg_2h^{-1}) &= d(g_1h^{-1},g_2h^{-1}) \\
    &\le d(g_1h^{-1}, g_1) + d(g_1, g_2) + d(g_2,g_2h^{-1}) \\
    &= d(g_1,g_2) + 2d(1,h^{-1})
  \end{align*}
  and so the conjugation action of $G$ on itself is an action by rough
  isometries.  Hence $P$ acts on on $(\Z^n,d)$ by rough isometries.

  Let $|\cdot|_d$ be the norm on $\R^n$ obtained from $(\Z^n,d)$ as
  given by \Propref{mtonorm}.  Then the inclusion
  $(\Z^n,d) \hookrightarrow \bigl(\R^n,|\cdot|_d\bigr)$ is a rough
  isometry and the $P$-action on $\bigl(\R^n,|\cdot|_d\bigr)$ is by
  linear isometries.  Thus we have the following diagram
  \[ \bigl(\R^n,|\cdot|_d\bigr) \hookleftarrow (\Z^n,d)
  \hookrightarrow (G,d) \rightarrow Y \]
  in which all arrows are rough isometries.  By \Propref{roughisom}
  and \Propref{scaleinv} we see that any asymptotic cone of $Y$
  is isometric to $\bigl(\R^n,|\cdot|_d\bigr)$.  But, by
  \Lemref{nilpinf}, any asymptotic cone of $Y$ is isometric to
  $\bigl(\R^n,|\cdot|_{\infty}\bigr)$ so, there is an isometry
  $\phi \colon \bigl(\R^n,|\cdot|_d\bigr) \to
  \bigl(\R^n,|\cdot|_{\infty}\bigr)$.
  By the Mazur-Ulam Theorem, the isometry $\phi$ is an affine map.
  So, by composing with a translation, we may assume that $\phi$ is a
  linear isometry.  Then $\phi \in \GL_n(\R)$ and
  $\phi P' \phi^{-1} < \IsomAut \bigl(\R^n,|\cdot|_{\infty}\bigr)$.
  This completes the proof of
  $\pitmref{ginjective} \Rightarrow \pitmref{ptgroup}$.

  Next we prove $\pitmref{ptgroup} \Rightarrow \pitmref{affisom}$.
  Let $\phi \in \GL_n(\R)$ conjugate $P'$ into
  $\IsomAut\bigl(\R^n,|\cdot|_{\infty}\bigr)$.  Then the composition
  of $G \to \R^n \rtimes P$ with the map
  \begin{align*}
    \R_n \rtimes P &\to \R_n \rtimes \IsomAut\bigl(\R^n,|\cdot|_{\infty}\bigr) \\
    (r,p) &\mapsto \bigl(\phi(r),\phi \alpha(p) \phi^{-1}\bigr)
  \end{align*}
  is a proper and cocompact action of $G$ on the group
  $\R_n \rtimes \IsomAut\bigl(\R^n,|\cdot|_{\infty}\bigr)$ of affine
  isometries of $\bigl(\R^n,|\cdot|_{\infty}\bigr)$.

  To prove $\pitmref{affisom} \Rightarrow \pitmref{tiling}$, suppose
  $G$ acts properly and cocompactly by affine isometries on
  $\bigl(\R^n,|\cdot|_{\infty}\bigr)$.  Let
  $H_i = \bigl\{(x_1, x_2, \ldots, x_n) \in \R^n \sth x_i = 0\bigr\}$
  be the $i$th coordinate hyperplane of $\R^n$.  Consider the
  polyhedral cellulation of $\R^n$ induced by the set of all
  translates of the $H_i$ under the action of $G$.  Restricting the
  $G$-action to the image of $\Z^n \to G$ we see that the cells of the
  cellulation are bounded.  Since $G$ acts by affine isometries, each
  $g H_i$ is perpendicular to some $H_{i'}$ so the cells are all
  axis-aligned boxes.  Finally, since the action of $G$ is proper and
  cocompact, the $G$-orbit of the $0 \in \R^n$ is discrete and so the
  cellulation induced by the coordinate hyperplanes is isomorphic to
  the standard tiling of $\R^n$ by cubes.

  The $\pitmref{tiling} \Rightarrow \pitmref{cocub}$ implication is
  immediate.

  The implication $\pitmref{cocub} \Rightarrow \pitmref{ghelly}$
  follows from the theorem that if $X$ is a $\CAT(0)$ cube complex
  then the graph obtained from $X^0$ by joining any two vertices that
  are contained in a common cube of $X$ is a Helly graph
  \cite{Bandelt:1991, Polat:2001, Roller:1998, Gerasimov:1998,
    Chepoi:2000}.

  The $\pitmref{ghelly} \Rightarrow \pitmref{ginjective}$ implication is
  immediate.
\end{proof}

\begin{cor}
  The $3$-$3$-$3$-Coxeter group $G$ is not coarsely injective.  In
  particular, $G$ is not Helly.
\end{cor}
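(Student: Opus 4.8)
The plan is to derive this from \Thmref{main}, via the contrapositive of the implication $\pitmref{ghelly} \Rightarrow \pitmref{ptgroup}$. The $3$-$3$-$3$-Coxeter group is the affine Coxeter (Weyl) group of type $\widetilde{A}_2$: it acts properly, cocompactly and faithfully on the Euclidean plane preserving the tiling by equilateral triangles, its translation subgroup is the $A_2$ root lattice $\Lambda \isomor \Z^2$, and $\Lambda$ is normal in $G$ with finite quotient $G/\Lambda$ isomorphic to the finite Coxeter group of type $A_2$, namely the symmetric group $S_3$ of order $6$. In particular $G$ is a finitely generated virtually abelian group, so \Thmref{main} applies.

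First I would identify the point group $P'$ of $G$. The conjugation action of $G$ on $\Lambda$ factors through $S_3$ and is the restriction to $\Lambda$ of the reflection representation of $S_3$ on $\R^2$. This representation is faithful: $S_3$ acts faithfully by linear maps on $\R^2$ and $\Lambda$ spans $\R^2$, so no nontrivial element of $S_3$ fixes $\Lambda$ pointwise. Hence $P' < \GL_2(\R)$ has order $6$. (By the discussion in \Secref{virtually_zn} the point group is well defined up to conjugacy in $\GL_2(\R)$, so this does not depend on the particular choice of finite-index abelian normal subgroup of $G$.)

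Next I would observe that $\IsomAut\bigl(\R^2,|\cdot|_{\infty}\bigr)$ has order $8$: a linear map of $\R^2$ is an isometry of $|\cdot|_{\infty}$ exactly when it preserves the unit ball $[-1,1]^2$, and the linear symmetries of this square form the dihedral group of order $8$. Now for any $\phi \in \GL_2(\R)$ the conjugate $\phi P' \phi^{-1}$ again has order $6$, and since $6$ does not divide $8$, Lagrange's theorem shows that $\phi P' \phi^{-1}$ is not a subgroup of $\IsomAut\bigl(\R^2,|\cdot|_{\infty}\bigr)$. Thus condition \pitmref{ptgroup} of \Thmref{main} fails for $G$, so condition \pitmref{ghelly} fails as well and $G$ is not Helly.

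The only step requiring any care is pinning down the point group, that is, checking that the action of the quotient $S_3$ on the root lattice is the faithful reflection representation (and recalling that any other choice of finite-index abelian normal subgroup yields a $\GL_2(\R)$-conjugate point group of the same order); after that, the argument is simply the order count that $6$ does not divide $8$.
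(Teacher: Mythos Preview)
Your proof is correct and follows essentially the same approach as the paper: apply the contrapositive of \Thmref{main} by showing that the point group $P'$ of the $3$-$3$-$3$-Coxeter group cannot be conjugated into $\IsomAut\bigl(\R^2,|\cdot|_{\infty}\bigr)$. The only cosmetic difference is the particular group-theoretic obstruction: the paper notes that $P'$ contains an element of order $3$ while the dihedral group $D_4 \isomor \IsomAut\bigl(\R^2,|\cdot|_{\infty}\bigr)$ does not, whereas you argue via Lagrange that $|P'|=6$ does not divide $|D_4|=8$; these are equivalent elementary observations.
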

\begin{proof}
  Let $P' < \GL_2(\R)$ be the the point group of $G$.  Then $G$ has an
  element of order $3$ whereas
  $\IsomAut\bigl(\R^n,|\cdot|_{\infty}\bigr)$ is isomorphic to the
  dihedral group $D_4$ and so has no elements of order $3$.  Hence
  $P'$ cannot be conjugated into
  $\IsomAut\bigl(\R^n,|\cdot|_{\infty}\bigr)$.
\end{proof}

\begin{cor}
  There exists a group that is both systolic and $\CAT(0)$ but not
  coarsely injective.
\end{cor}

\appendix
\section{Short exact sequences and equivariant homomorphisms}

In this appendix, we prove a proposition about morphisms of short
exact sequences of groups.

Let \[ \begin{tikzcd}
    1 \ar[r] & N \ar[r] \ar[d, "\phi"] & G \ar[r] \ar[d, "f"] & Q \ar[r] \ar[d, "1_Q"] & 1 \\
    1 \ar[r] & M \ar[r] & H \ar[r] & Q \ar[r] & 1
  \end{tikzcd} \] be a commutative diagram of groups where the two
rows are short exact sequences.  Such a diagram is called a
\defterm{morphism of short exact sequences of groups}.  Then $G$ acts
on $N$ by conjugation as in $g \cdot n = gng^{-1}$ and $G$ acts on $M$
by conjugation via $G \to H$ as in $g \cdot m = f(g)mf(g)^{-1}$.
Moreover, viewing $N$ and $M$ as subgroups of $G$ and $H$, we have
\[ \phi(g \cdot n) = \phi(gng^{-1}) = f(gng^{-1}) = f(g)f(n)f(g)^{-1}
  = f(g)\phi(n)f(g)^{-1} = g \cdot \phi(n) \] and
\[ n \cdot m = f(n)mf(n)^{-1} = \phi(n)m\phi(n)^{-1} \] so $\phi$ is
$G$-equivariant and the $G$ action on $M$ extends the $N$ action on
$M$ by conjugation via $\phi$.  The following proposition gives a
converse.

\begin{prop}
  \proplabel{sesmap} Let \[ 1 \to N \to G \to Q \to 1 \] be a short
  exact sequence of groups.  Consider the left $G$-action by
  conjugation on $N$ given by $g \cdot n = gng^{-1}$.  Suppose $G$
  acts on some other group $M$ by automorphisms and let
  $\phi \colon N \to M$ be a $G$-equivariant homomorphism such that
  for all $n \in N$ and $m \in M$ we have
  $n \cdot m = \phi(n)m\phi(n)^{-1}$.  Then, up to isomorphism, there
  is a unique morphism of short exact sequences
  \[ \begin{tikzcd}
      1 \ar[r] & N \ar[r] \ar[d, "\phi"] & G \ar[r] \ar[d, "\hat\phi"] & Q \ar[r] \ar[d, "1_Q"] & 1 \\
      1 \ar[r] & M \ar[r] & G_{\phi} \ar[r] & Q \ar[r] & 1
    \end{tikzcd} \] such that
  \[ g \cdot m = \hat\phi(g)m\hat\phi(g)^{-1} \] for all $g \in G$.
\end{prop}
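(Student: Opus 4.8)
The plan is to construct $G_\phi$ explicitly as a quotient of the semidirect-product-like gluing of $G$ and $M$ along $N$, and then verify the universal property. First I would form the group $G \ltimes M$ using the given $G$-action on $M$ by automorphisms — that is, the set $G \times M$ with multiplication $(g,m)(g',m') = (gg', (g'^{-1}\cdot m)m')$ — and inside it consider the subgroup $D = \{(n^{-1}, \phi(n)) : n \in N\}$. The hypothesis $n \cdot m = \phi(n) m \phi(n)^{-1}$ is exactly what is needed to check that $D$ is normal in $G \ltimes M$: conjugating $(n^{-1},\phi(n))$ by a general $(g,m)$ and using $G$-equivariance of $\phi$ (so that $\phi(gng^{-1}) = g\cdot\phi(n)$) lands back in $D$. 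I would then set $G_\phi = (G \ltimes M)/D$.

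Next I would exhibit the maps. The composite $M \to G \ltimes M \to G_\phi$, $m \mapsto (1,m)D$, is injective because $D \cap (\{1\} \times M) = \{(1,1)\}$ (this uses that $\phi$ restricted to its role here is compatible with the injection $N \hookrightarrow G$; concretely $(n^{-1},\phi(n)) = (1,m)$ forces $n = 1$). Define $\hat\phi \colon G \to G_\phi$ by $g \mapsto (g,1)D$; this is a homomorphism since $(g,1)(g',1) = (gg',1)$ in $G \ltimes M$. For $n \in N$ we have $(n,1)D = (n,1)(n^{-1},\phi(n))D = (1, \phi(n))D$, which is precisely the image of $\phi(n)$ under $M \hookrightarrow G_\phi$, so the left square commutes. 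The quotient map $G_\phi \to Q$ is induced by $(g,m) \mapsto gN$, which kills $M$ and $D$ and makes the right square commute; exactness of the bottom row follows by a diagram chase, the key point being that $(g,m)D$ maps to $1 \in Q$ iff $g \in N$ iff $(g,m)D = (1, (\text{something in }M))D$ lies in the image of $M$. Finally, $g \cdot m = \hat\phi(g) m \hat\phi(g)^{-1}$ in $G_\phi$ is a direct computation: $(g,1)(1,m)(g^{-1},1) = (1, g\cdot m)$ in $G \ltimes M$ by the semidirect product rule, then pass to the quotient.

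For uniqueness, suppose $G_\phi'$ with maps $\hat\phi'$, $M \hookrightarrow G_\phi'$ is another such morphism. Every element of $G_\phi'$ can be written as $\hat\phi'(g)\cdot m$ for some $g \in G$, $m \in M$, since the image of $\hat\phi'$ together with $M$ generates $G_\phi'$ (anything mapping to $q \in Q$ is $\hat\phi'(g)$ times an element of the kernel $M$, picking any lift $g$ of $q$). So I would define $\Psi \colon G_\phi \to G_\phi'$ by $(g,m)D \mapsto \hat\phi'(g) m$ and check it is well defined on $D$ (using $\hat\phi'(n) = \phi(n) \in M$ inside $G_\phi'$, which holds by commutativity of the left square for $G_\phi'$), a homomorphism (using the compatibility $g\cdot m = \hat\phi'(g)m\hat\phi'(g)^{-1}$ in $G_\phi'$ to reorder), and compatible with all the structure maps; then it is an isomorphism by a short argument with the five lemma applied to the morphism of short exact sequences.

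The main obstacle is organizational rather than deep: one has to be careful about the two distinct conjugation actions and the direction of the semidirect product convention so that the normality of $D$ and the relation $g\cdot m = \hat\phi(g)m\hat\phi(g)^{-1}$ come out consistently; once the bookkeeping is fixed, each verification is a routine computation. In particular the identity $\phi(g\cdot n) = g\cdot\phi(n)$ — $G$-equivariance, which for $g \in N$ specializes to the conjugation compatibility $n\cdot m = \phi(n)m\phi(n)^{-1}$ extended from $N$ to $G$ — is used at exactly the point where normality of $D$ is checked, and this is the only place the full strength of the hypotheses enters.
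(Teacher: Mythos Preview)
Your proposal is correct and follows essentially the same route as the paper: form the semidirect product of $G$ with $M$, quotient by the ``diagonal'' copy of $N$, and for uniqueness build a map out of the semidirect product and show it descends. The only differences are cosmetic---the paper writes the semidirect product as $M \rtimes G$ with the standard left convention $(m_1,g_1)(m_2,g_2) = (m_1(g_1\cdot m_2), g_1g_2)$ and the normal subgroup as $K = \{(\phi(n)^{-1},n)\}$, whereas you swap the factors; and the paper computes the kernel of $\psi$ explicitly rather than invoking the five lemma---but under the obvious isomorphism $(g,m) \mapsto (g\cdot m, g)$ your $D$ maps to the paper's $K$ and your $\Psi$ agrees with the paper's $\bar\psi$.
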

\begin{proof}
  Let
  $K = \Bigl\{\bigl(\phi(n)^{-1},n\bigr) \sth n \in N \Bigr\} \subset
  M \rtimes G$.  Then, since
  \[ \bigl(\phi(n_1)^{-1},n_1\bigr)^{-1} =
    \bigl(n_1^{-1} \cdot \phi(n_1),n_1^{-1}\bigr)
    = \bigl(\phi(n_1^{-1})\phi(n_1)\phi(n_1),n_1^{-1}\bigr)
    = \bigl(\phi(n_1),n_1^{-1}\bigr)\] and
  \begin{align*}
    \bigl(\phi(n_1)^{-1},n_1\bigr)\bigl(\phi(n_2)^{-1},n_2\bigr)
  &= \bigl(\phi(n_1)^{-1}(n_1 \cdot \phi(n_2)^{-1}),n_1n_2\bigr) \\
  &= \bigl(\phi(n_1)^{-1}\phi(n_1)\phi(n_2)^{-1}\phi(n_1)^{-1}),n_1n_2\bigr) \\
  &= \bigl(\phi(n_2)^{-1}\phi(n_1)^{-1}),n_1n_2\bigr) \\
  &= \bigl(\phi(n_1n_2)^{-1},n_1n_2\bigr)
  \end{align*}
  $K$ is a subgroup of $M \rtimes G$.  Moreover
  \begin{align*}
    (m,g)\bigl(\phi(n)^{-1},n\bigr)(m,g)^{-1}
    &= \Bigl(m\bigl(g\cdot \phi(n)^{-1}\bigr), gn\Bigr)
      (g^{-1} \cdot m^{-1}, g^{-1}) \\
    &= \bigl(m\phi(g\cdot n)^{-1}(gng^{-1} \cdot m^{-1}), gng^{-1}\bigr) \\
    &= \bigl(m\phi(g\cdot n)^{-1}\phi(gng^{-1})m^{-1}\phi(gng^{-1})^{-1}, gng^{-1}\bigr) \\
    &= \bigl(m\phi(g\cdot n)^{-1}\phi(g \cdot n)m^{-1}\phi(gn^{-1}g^{-1}), gng^{-1}\bigr) \\
    &= \bigl(\phi(gn^{-1}g^{-1}), gng^{-1}\bigr)
  \end{align*}
  and so $K$ is a normal subgroup of $M \rtimes G$.

  Since $M$ and $K$ are disjoint in $M \rtimes G$ the map
  $M \rtimes G \to (M \rtimes G) / K$ is an embedding and so
  \[ 1 \to M \to (M \rtimes G) / K \to (M \rtimes G) / MK \to 1 \] is
  a short exact sequence.  By the definition of $K$, the square
  \[ \begin{tikzcd}
      N \ar[r] \ar[d, "\phi"] & \ar[d] G \\
      M \ar[r] & (M \rtimes G) / K
    \end{tikzcd} \] commutes and so the diagram
  \[ \begin{tikzcd}
      1 \ar[r] & N \ar[r] \ar[d, "\phi"] & G \ar[r] \ar[d] & Q \ar[r] \ar[d]
      & 1 \\
      1 \ar[r] & M \ar[r] & (M \rtimes G) / K \ar[r] & (M \rtimes G) /
      MK \ar[r] & 1
    \end{tikzcd} \] commutes, where $Q \to (M \rtimes G) / MK$ is
  defined by $gN \mapsto (1,g)MK$.

  Taking $G_{\alpha} = (M \rtimes G) / K$ we need to show that
  $Q \to (M \rtimes G) / MK$ is an isomorphism.  Since
  $(m,g)MK = (1,g)MK$ the map $Q \to (M \rtimes G) / MK$ is
  surjective.  If $gN$ is in the kernel of $Q \to (M \rtimes G) / MK$
  then $(1,g) \in MK$ so
  $(1,g) = (m,1)\bigl(\phi(n)^{-1},n\bigr) =
  \bigl(m\phi(n)^{-1},n\bigr)$ for some $m \in M$ and $n \in N$.  Then
  $m = \phi(n)$ and $g = n$ so $g \in N$.  Hence
  $Q \to (M \rtimes G) / MK$ is also injective.

  It remains to prove uniqueness.  Suppose
  \begin{equation}
    \eqnlabel{morph}
    \begin{tikzcd}
      1 \ar[r] & N \ar[r] \ar[d, "\phi"] & G \ar[r] \ar[d, "\hat\phi'"] & Q \ar[r] \ar[d, "1_Q"] & 1 \\
      1 \ar[r] & M \ar[r] & G_{\phi}' \ar[r] & Q \ar[r] & 1
    \end{tikzcd}
  \end{equation}
  is some other morphism of short exact sequences such that
  \[ g \cdot m = \hat\phi'(g)m\hat\phi'(g)^{-1} \] for all $g \in G$.
  Consider the function $\psi\colon M\rtimes G \to G_{\phi}'$ given by
  $\psi(m,g) = m\hat\phi'(g)$.  By the computation
  \begin{align*}
    \psi(m_1,g_1)\psi(m_2,g_2)
    &= m_1\hat\phi'(g_1)m_2\hat\phi'(g_2) \\
    &= m_1\hat\phi'(g_1)m_2\hat\phi'(g_1)^{-1}\hat\phi'(g_1)\hat\phi'(g_2) \\
    &= m_1(g_1 \cdot m_2)\hat\phi'(g_1g_2) \\
    &= \psi\bigl(m_1(g_1 \cdot m_2),g_1g_2\bigr) \\
    &= \psi\bigl((m_1,g_1)(m_2,g_2)\bigr)
  \end{align*}
  $\psi$ is a homomorphism.  The composition
  $M \to M \rtimes G \to G_{\phi}'$ is the inclusion of $M$ and the
  composition $G \to M \rtimes G \to G_{\phi}' \to Q$ is the
  composition $G \to G_{\phi}' \to Q$ which, by exactness and
  commutativity in \peqnref{morph}, is surjective.  So the image of
  $\psi$ contain $M$ and intersects every coset of $M$ and so $\psi$
  is surjective.  The kernel of $\psi$ is
  $\ker\psi = \bigl\{(m,g) \sth \hat\phi'(g) = m^{-1} \bigr\}$ but if
  $\phi'(g) \in M$ then, by exactness and commutativity in
  \peqnref{morph}, we have $g \in N$.  So
  \[ \ker\psi = \Bigl\{\bigl(\phi(n)^{-1},n\bigr) \sth n \in N \Bigr\}
    = K \] so $\psi$ descends to an isomorphism
  $\bar\psi \colon (M \rtimes G) / K \to G_{\phi}'$ and we have an
  isomorphism of short exact sequences
  \[ \begin{tikzcd}
      1 \ar[r] & M \ar[r] \ar[d, "1_M"]
      & (M \rtimes G) / K \ar[r] \ar[d, "\bar\psi"]
      & (M \rtimes G) / MK \ar[r] \ar[d] & 1 \\
      1 \ar[r] & M \ar[r] & G_{\phi}' \ar[r] & Q \ar[r] & 1
    \end{tikzcd} \] through which the morphism \peqnref{morph}
  factors.
\end{proof}

\bibliographystyle{abbrv}
\bibliography{nima,syshel}
\end{document}
